\documentclass[11pt,a4paper]{article}
\usepackage[utf8]{inputenc}
\usepackage{amsmath}
\usepackage{amscd}
\usepackage{amsthm}
\usepackage{amssymb}
\usepackage{tikz-cd}
\usepackage{amsfonts}
\usepackage[all]{xy}
\usepackage{mathtools}
\usepackage{multirow}
\usepackage{lipsum}
\usepackage{hyperref}
\usepackage{newtxtext,newtxmath}
\usepackage{graphicx}
\usepackage[]{units}
\usepackage{enumitem}
\usepackage{booktabs}
\usepackage{url}
\usepackage{times}
\newtheorem{theorem}{Theorem}[section]
\newtheorem{lemma}{Lemma}[section]

\newtheorem{remark}{Remark}
\newtheorem*{concluding remark}{Concluding Remark}
\newtheorem{theoremA}{Theorem}

\title{Higher symmetric power $L$-functions and their Fourier coefficients}
\author{Kampamolla Venkatasubbareddy\\Email: \href{mailto:20mmpp02@uohyd.ac.in}{20mmpp02@uohyd.ac.in} \and Ayyadurai Sankaranarayanan\\Email: \href{mailto:sank@uohyd.ac.in}{sank@uohyd.ac.in}}

\date{}

\begin{document}
\maketitle

\begin{abstract}
   Let $H_k$ be the set of all normalized primitive holomorphic cusp forms of even integral weight $k\geq 2$ for the full modular group $SL(2, \mathbb{Z})$, and let $j\geq 3$ be any fixed integer. For $f\in H_k$, we write $\lambda_{{\rm{sym}^j  }f}(n)$ for the $n^\textit{th}$ normalized Fourier coefficient of $L(s,{\rm{sym}}^j f)$. In this article, we establish an asymptotic formula for the sum 
   \begin{equation*}
       \sum_{\substack{n=a_1^2+a_2^2+\hdots+a_6^2\leq x\\\left(a_1,a_2,\hdots, a_6\right)\in \mathbb{Z}^6}}\lambda_{{\rm{sym}}^j  f}^2(n),
   \end{equation*}
   with an improved error term.
\end{abstract}

\footnote{2020 AMS \emph{Mathematics subject classification.} Primary 11F11, 11F30, 11M06.}
\footnote{\emph{Key words and phrases.} Fourier coefficients of automorphic forms, Dirichlet series, Riemann zeta function, Perron's formula.}

\section{Introduction}
For an even integer $k\geq 2$, denote $H_k$ the set of all normalized Hecke primitive cusp forms of weight $k$ for the full modular group $SL(2,\ \mathbb{Z})$. Throughout this paper we call the function $f(z)$ as a primitive cusp form if it is an eigenfunction of all Hecke operators simultaneously. It is known that $f(z)$ has a Fourier expansion at cusp $\infty$, write it as
\begin{equation*}
    f(z)=\sum_{n=1}^\infty \lambda_f(n)n^{(k-1)/2}e^{2\pi i n z}
\end{equation*}
for $\Im (z)>0$.\\
Then by Deligne \cite{Deligne}, we have, for any prime number $p$, there exists two complex numbers $\alpha_f(p)$ and $\beta_f(p)$, such that 
\begin{equation*}
    \alpha_f(p)\beta_f(p)=|\alpha_f(p)|=|\beta_f(p)|=1
\end{equation*} 
and 
\begin{equation*}
    \lambda_f(p)=\alpha_f(p)+\beta_f(p),
\end{equation*}
and also for holomorphic cusp forms, Deligne \cite{Deligne} proved the Ramanujan- Petersson conjecture
\begin{equation*}
	\mid\lambda_f(n)\mid\leq d(n).
\end{equation*}
Let $f\in H_k$. The Hecke $L$-function attached to $f$ is defined as
\begin{equation*}
    L(s, f)=\sum_{n= 1}^\infty\frac{\lambda_f(n)}{n^s}=\prod_p\left(1-\frac{\alpha_f(p)}{p^s}\right)^{-1}\left(1-\frac{\beta_f(p)}{p^s}\right)^{-1}
\end{equation*}
which converges absolutely for $\Re(s)>1.$\\
The $j^{\textit{th}}$ symmetric power $L$-function attached to $f$ is defined as
\begin{equation*}
        L(s, {\rm{sym}}^jf)=\prod_p\prod_{m=0}^j(1-\alpha_p^{j-2m}p^{-s})^{-1}
\end{equation*}
for $\Re (s)>1$. We may express it as a Dirichlet series: for $\Re (s)>1$, 
\begin{align*}
     L(s,{\rm{sym}}^j f)&=\sum_{n=1}^\infty \frac{\lambda_{{\rm{sym}}^j f}(n)}{n^s}\nonumber\\
     &=\prod_p\bigg(1+\frac{\lambda_{{\rm{sym}}^j f}(p)}{p^s}+\ldots+\frac{\lambda_{{\rm{sym}}^j f}(p^k)}{p^{ks}}+\ldots\bigg).
\end{align*}
It is well known that $\lambda_{{\rm{sym}}^j f}(n)$ is a real multiplicative function.\\
Note that $\displaystyle L(s,{\rm{sym}}^0 f)=\zeta(s)$ (Riemann zeta function) and $L(s,{\rm{sym}}^1 f)=L(s,f)$ (Hecke $L$-function).\\
The twisted $j^{\textit{th}}$ symmetric power $L$-function attached to $f$ by the Dirichlet character $\chi$ is defined as 
\begin{align*}
	L(s,\ {\rm{sym}}^{j}f\otimes\chi)=&\sum_{n=1}^\infty\frac{\lambda_{{\rm{sym}}^{j}f}(n)\chi(n)}{n^s}\\
		=&\prod_p\prod_{m=0}^{j}\left(1-\frac{\alpha_f^{{2j}-2m}(p)\chi(p)}{p^s}\right)^{-1}
\end{align*}
	for $\Re(s)>1$ and $L(s,\ {\rm{sym}}^{j}f\otimes\chi)$ is of degree $j+1$.\\
For any Dirichlet character modulo $q$, the Dirichlet $L$-function is defined as 
\begin{equation*}
    L(s,\chi)=\sum_{n=1}^\infty \frac{\chi(n)}{n^s}=\prod_p \left(1-\frac{\chi(p)}{p^s}\right)^{-1}
\end{equation*}
for $\Re(s)>1$.\\
Throughout the paper we assume $\epsilon>0$ to be small which need not be the same at each occurrence.\\
It has been an interesting and important topic in number theory to study the average behavior of the Fourier coefficients of cusp forms since long time. \\
In the paper \cite{ASAS2022}, A. Sharma and A. Sankaranarayanan proved that 
\begin{equation*}
    \sum_{\substack{n=a^2+b^2+c^2+d^2\leq x\\a,b,c,d\in \mathbb{Z}}}\lambda_{{\rm{sym}}^2  f}^2(n)=c_1x^2+O_f\left(x^{\frac{9}{5}+\epsilon}\right),
\end{equation*}
for sufficiently large $x\geq x_0$.\\
Again in another paper \cite{ASAS2023}, A. Sharma and A. Sankaranarayanan established the following theorem.
\begin{theoremA} 
Let $j\geq 2$ be any fixed integer. For sufficiently large $x$, and $\epsilon>0$ any small constant, we have 
    \begin{equation*}
   \sum_{\substack{n=a_1^2+a_2^2+\hdots+a_6^2\leq x\\\left(a_1,a_2,\hdots, a_6\right)\in \mathbb{Z}^6}}\lambda_{{\rm{sym}}^j  f}^2(n) =c_{f,j} x^3+O\left(x^{3-\frac{6}{3(j+1)^2+1}+\epsilon}\right),
\end{equation*}
where $c_{f,j}$ is an effective constant defined by 
\begin{equation*}
    c_{f,j}=\frac{16}{3}L(3,\chi)\prod_{n=1}^j L(1,{\rm{sym}}^{2n} f) L(3,{\rm{sym}}^{2n} f\otimes \chi)\mathcal{H}_j(3),
\end{equation*}
where $\chi$ is the nonprincipal Dirichlet character modulo $4$.
\end{theoremA}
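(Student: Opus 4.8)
\medskip
\noindent\textbf{Proof strategy.}
The plan is to recast the sum as a weighted coefficient sum, pass to the associated Dirichlet series, factor that series through symmetric power $L$-functions, and then extract the main term by moving the contour in Perron's formula. First I would write $r_{6}(n)$ for the number of ordered representations of $n$ as a sum of six integer squares, so that the quantity to be estimated is $S(x):=\sum_{n\le x}\lambda_{{\rm{sym}}^{j}f}^{2}(n)\,r_{6}(n)$. Here $r_{6}$ has the classical closed form $r_{6}(n)=16\sum_{d\mid n}\chi(n/d)d^{2}-4\sum_{d\mid n}\chi(d)d^{2}$, with $\chi$ the nonprincipal character modulo $4$; in particular $r_{6}(n)\ll n^{2+\epsilon}$, and
\[
\sum_{n\ge 1}\frac{r_{6}(n)}{n^{s}}=16\,\zeta(s-2)L(s,\chi)-4\,\zeta(s)L(s-2,\chi),
\]
whose only pole in $\Re s>1$ is the simple one at $s=3$, contributed by $\zeta(s-2)$.

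Since $\lambda_{{\rm{sym}}^{j}f}^{2}$ and $r_{6}$ are finite linear combinations of multiplicative functions, the Dirichlet series $D(s):=\sum_{n\ge1}\lambda_{{\rm{sym}}^{j}f}^{2}(n)\,r_{6}(n)\,n^{-s}$, $\Re s>3$, has an Euler product, which I would factor prime by prime after splitting $D$ according to the two summands in the formula for $r_{6}$. For the dominant piece, combining the Rankin--Selberg decomposition ${\rm{sym}}^{j}f\otimes{\rm{sym}}^{j}f\cong\bigoplus_{n=0}^{j}{\rm{sym}}^{2n}f$ (so that $\sum_{n}\lambda_{{\rm{sym}}^{j}f}^{2}(n)n^{-s}$ agrees with $\zeta(s)\prod_{n=1}^{j}L(s,{\rm{sym}}^{2n}f)/\zeta(2s)$ up to an Euler product that converges absolutely well inside $\Re s>\tfrac12$) with the twisted-divisor weight present in $r_{6}$ yields
\[
\zeta(s-2)\,L(s,\chi)\prod_{n=1}^{j}L(s-2,{\rm{sym}}^{2n}f)\,L(s,{\rm{sym}}^{2n}f\otimes\chi)\cdot\mathcal{H}_{j}(s),
\]
where $\mathcal{H}_{j}(s)$ absorbs everything not displayed and is an Euler product converging absolutely --- hence holomorphic and bounded --- in some half-plane $\Re s>3-\delta_{0}$, $\delta_{0}=\delta_{0}(j)>0$; the matching of leading Euler factors hinges precisely on $\lambda_{{\rm{sym}}^{j}f}(p)^{2}=1+\sum_{n=1}^{j}\lambda_{{\rm{sym}}^{2n}f}(p)$ and its higher prime-power analogues. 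The remaining piece of $D$ factors analogously (now with the shift by $2$ and the $\chi$-twist attached to the symmetric power factors); it is holomorphic at $s=3$ and affects $S(x)$ only through a pole at $s=1$, contributing $O(x^{1+\epsilon})$. By Newton--Thorne every ${\rm{sym}}^{2n}f$ is cuspidal automorphic on $GL_{2n+1}$, so each $L$-factor occurring above, apart from the $\zeta$'s, is entire, has a functional equation $s\leftrightarrow 1-s$ with known conductor and gamma factor, satisfies the convexity bound, and is nonzero on $\Re s=1$. Hence $D(s)$ continues meromorphically to $\Re s>3-\delta_{0}$, its only pole there being the simple pole at $s=3$ from $\zeta(s-2)$, with
\[
\operatorname*{Res}_{s=3}D(s)=16\,L(3,\chi)\prod_{n=1}^{j}L(1,{\rm{sym}}^{2n}f)\,L(3,{\rm{sym}}^{2n}f\otimes\chi)\cdot\mathcal{H}_{j}(3)=3\,c_{f,j}.
\]

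Next I would apply the truncated Perron formula at $c=3+\epsilon$ with a free parameter $1\le T\le x$, using $\sum_{n\le x}|\lambda_{{\rm{sym}}^{j}f}^{2}(n)r_{6}(n)|\ll x^{3+\epsilon}$, and move the line of integration to $\Re s=\sigma$ with $3-\delta_{0}<\sigma<3$. The simple pole at $s=3$ produces the main term $c_{f,j}x^{3}$; it remains to bound the two horizontal segments and the new vertical line. On these, $L(s,\chi)$, $\prod_{n}L(s,{\rm{sym}}^{2n}f\otimes\chi)$ and $\mathcal{H}_{j}(s)$ are essentially bounded, while $\zeta(s-2)\prod_{n=1}^{j}L(s-2,{\rm{sym}}^{2n}f)$ is the degree-$(j+1)^{2}$ Rankin--Selberg $L$-function $L(s,{\rm{sym}}^{j}f\times{\rm{sym}}^{j}f)$ evaluated in, or just to the left of, its critical strip, which I would control by the convexity bound sharpened through mean-value estimates (a second-moment / large-sieve bound combined via Cauchy--Schwarz) and the functional equation. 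Balancing the resulting estimate $x^{\sigma}T^{A(\sigma)+\epsilon}$ against the Perron truncation error $x^{3+\epsilon}T^{-1}$ and optimizing over $\sigma$ and $T$ --- if need be first peeling off $\mathcal{H}_{j}(s)=\sum_{\ell}b(\ell)\ell^{-s}$, so that the sub-sum attached to the displayed $L$-function product can have its contour pushed past $\Re s=3-\delta_{0}$ without obstruction --- is what is meant to produce the error $O\bigl(x^{3-\frac{6}{3(j+1)^{2}+1}+\epsilon}\bigr)$. The tail of the $\mathcal{H}_{j}$-convolution and the pole at $s=1$ contribute only $O(x^{1+\epsilon})$, which is absorbed since $3-\tfrac{6}{3(j+1)^{2}+1}>1$.

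The step I expect to be the real obstacle is controlling the growth of this assembled $L$-function on vertical lines: its degree is of order $(j+1)^{2}$, so the convexity bound by itself yields no power saving at all, and the improvement must be squeezed out of mean-value theorems for $L(s,{\rm{sym}}^{j}f\times{\rm{sym}}^{j}f)$ (or for the individual $L(s,{\rm{sym}}^{2n}f)$) together with the effective length of their Dirichlet series inside the critical strip. Carrying out the bookkeeping of degree, conductor and truncation height precisely enough to land on the exponent $3-\frac{6}{3(j+1)^{2}+1}$ rather than on a weaker one is the delicate point; the factorization, the residue evaluation and the routine Perron estimates are otherwise mechanical once the analytic input on the symmetric power $L$-functions, guaranteed by Newton--Thorne, is in place.
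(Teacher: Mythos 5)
Your overall architecture --- the identity $r_6(n)=16\,l(n)-4\,\upsilon(n)$ with $\chi$ the nonprincipal character modulo $4$, the factorization of the resulting Dirichlet series through $\zeta(s-2)L(s,\chi)\prod_{n=1}^{j}L(s-2,{\rm{sym}}^{2n}f)L(s,{\rm{sym}}^{2n}f\otimes\chi)$ times an absolutely convergent factor, the residue at $s=3$ giving $c_{f,j}x^3$, and truncated Perron with a contour shift --- is exactly the route of \cite{ASAS2023} (this paper only cites that proof, but proves its own Theorem 1.1 by the same template). The problem is that you stop before the step that actually constitutes the theorem. The entire quantitative content is the exponent $3-\frac{6}{3(j+1)^2+1}$, and you explicitly defer the estimation that produces it. The real argument must fix a specific abscissa (the template uses $\Re(s)=2+\tfrac12+\epsilon$ or similar), split the degree-$(j+1)^2$ product $\zeta(s-2)\prod_n L(s-2,{\rm{sym}}^{2n}f)$ by H\"older/Cauchy--Schwarz into pieces controlled by the mean-value bound $\int_T^{2T}|\mathfrak{L}(\sigma+it)|^2\,dt\ll T^{\max\{m(1-\sigma),1\}+\epsilon}$ together with pointwise (sub)convexity bounds for the low-degree factors, and then balance against the Perron error $x^{3+\epsilon}/T$. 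The exponent encodes precisely which factor is estimated by which moment bound; a different split yields a different exponent (indeed the present paper's improvement for $j\ge3$ comes from nothing but a sharper such split). Gesturing at ``mean-value estimates combined via Cauchy--Schwarz'' does not establish the stated error term.

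There is also a step that is wrong rather than merely missing: you claim the second piece, $\sum_{n\le x}\lambda^2_{{\rm{sym}}^jf}(n)\upsilon(n)$, whose Dirichlet series carries $\zeta(s)L(s-2,\chi)\prod_n L(s,{\rm{sym}}^{2n}f)L(s-2,{\rm{sym}}^{2n}f\otimes\chi)$, ``affects $S(x)$ only through a pole at $s=1$, contributing $O(x^{1+\epsilon})$.'' Extracting only an $x^{1+\epsilon}$ contribution would require pushing the contour to near $\Re(s)=1$, where the shifted factors $L(s-2,\cdot)$ are evaluated at real part near $-1$, far to the left of their critical strips; by the functional equation they then grow like $|t|$ to a power of order $(j+1)^2$ and the shifted integral is not under control. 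In fact the trivial bound for this piece is already $x^{3+\epsilon}$, and one can only move its contour slightly left of $\Re(s)=3$ (the paper stops at $\Re(s)=2+\tfrac23$), obtaining a bound of the form $x^{3-\delta_j}$ with $\delta_j\to0$ as $j\to\infty$ --- the same order as the error from the first piece, and it must be balanced with the same choice of $T$. Dismissing it as $O(x^{1+\epsilon})$ loses the theorem.
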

Later, in the paper \cite{Youjun}, Youjun Wang proved theorem A for $j\geq 2$ with an improved error term.\\
More precisely he established:
\begin{theoremA}
    Let $j\geq 2$ be any fixed integer. For sufficiently large $x$, and  any small constant $\epsilon>0$, we have 
    \begin{equation*}
   \sum_{\substack{n=a_1^2+a_2^2+\hdots+a_6^2\leq x\\\left(a_1,a_2,\hdots, a_6\right)\in \mathbb{Z}^6}}\lambda_{{\rm{sym}}^j  f}^2(n) =c_{f,j} x^3+O\left(x^{3-\frac{6}{3(j+1)^2-1}+\epsilon}\right),
\end{equation*}
where 
\begin{equation*}
    c_{f,j}=\frac{16}{3}L(3,\chi)\prod_{n=1}^j L(1,{\rm{sym}}^{2n} f) L(3,{\rm{sym}}^{2n} f\otimes \chi)\mathcal{H}_j(3),
\end{equation*}
where $\chi$ is the nonprincipal Dirichlet character modulo $4$ and $\mathcal{H}_j(s)$ is a convergent Dirichlet series.\label{TB}
\end{theoremA}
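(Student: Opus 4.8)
The plan is to study the Dirichlet generating series of the coefficients $r_6(n)\lambda_{\mathrm{sym}^j f}^2(n)$ by a Perron--contour argument, the quality of the error term being controlled by mean-value estimates for the $L$-functions that occur. Write $r_6(n)$ for the number of representations of $n$ as an ordered sum of six integer squares, so the sum in question is $\sum_{n\le x}r_6(n)\lambda_{\mathrm{sym}^j f}^2(n)$, and recall the classical Jacobi-type identity
\[
r_6(n)=16\sum_{d\mid n}\chi(d)\Big(\tfrac{n}{d}\Big)^2-4\sum_{d\mid n}\chi(d)d^2 ,
\]
$\chi$ being the nonprincipal character modulo $4$. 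Put $F(s)=\sum_{n\ge1}r_6(n)\lambda_{\mathrm{sym}^j f}^2(n)n^{-s}$, absolutely convergent for $\Re s>3$ since $\lambda_{\mathrm{sym}^j f}(n)\ll n^\epsilon$ (Deligne) and $r_6(n)\ll n^{2+\epsilon}$.

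The next step is to factorise $F$. Using the multiplicativity of $\lambda_{\mathrm{sym}^j f}^2$ together with the Clebsch--Gordan identity $\mathrm{sym}^j f\otimes\mathrm{sym}^j f=\bigoplus_{n=0}^{j}\mathrm{sym}^{2n}f$ — so that $\sum_m\lambda_{\mathrm{sym}^j f}^2(m)m^{-w}$ and its $\chi$-twist factor respectively as $\zeta(w)\prod_{n=1}^{j}L(w,\mathrm{sym}^{2n}f)$ and $L(w,\chi)\prod_{n=1}^{j}L(w,\mathrm{sym}^{2n}f\otimes\chi)$ times Euler products absolutely convergent for $\Re w>\tfrac12$ — and separating the factor $d^2$ as a shift $s\mapsto s-2$, one is led to a factorisation
\[
F(s)=16\,\zeta(s-2)L(s,\chi)\prod_{n=1}^{j}L(s-2,\mathrm{sym}^{2n}f)L(s,\mathrm{sym}^{2n}f\otimes\chi)\,\mathcal{H}_j(s)+\mathcal{G}_j(s),
\]
where $\mathcal{H}_j(s)$ is a Dirichlet series absolutely convergent, hence holomorphic and $\ll 1$, in $\Re s>\tfrac52$ (the barrier $\tfrac52$ coming from a factor $\zeta(2(s-2))^{-1}$ absorbed into $\mathcal{H}_j$), while $\mathcal{G}_j(s)$ — arising from the "$-4$" term of $r_6$, which couples the shift with the twisted Rankin--Selberg $L$-function — is holomorphic and of polynomial growth there. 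Hence in $\Re s>\tfrac52$ the only singularity of $F$ is a simple pole at $s=3$, produced by $\zeta(s-2)$.

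By the truncated Perron formula with $b=3+\epsilon$,
\[
\sum_{n\le x}r_6(n)\lambda_{\mathrm{sym}^j f}^2(n)=\frac{1}{2\pi i}\int_{b-iT}^{b+iT}F(s)\frac{x^s}{s}\,ds+O\!\Big(\frac{x^{3+\epsilon}}{T}+x^{2+\epsilon}\Big) .
\]
Moving the contour to $\Re s=\tfrac52+\epsilon$ and collecting the residue at $s=3$ yields the main term: since $\zeta(s-2)$ has residue $1$ at $s=3$, each $L(s-2,\mathrm{sym}^{2n}f)$ specialises to $L(1,\mathrm{sym}^{2n}f)$ there, and one gains a factor $x^3/3$, this residue equals precisely $c_{f,j}x^3$. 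On the line $\Re s=\tfrac52+\epsilon$ the factors $L(s,\chi)$, $L(s,\mathrm{sym}^{2n}f\otimes\chi)$, $\mathcal{H}_j(s)$, $\mathcal{G}_j(s)$ contribute only $(1+|t|)^\epsilon$, so the shifted integral is governed by
\[
\int_{1}^{T}\Big|\zeta(\tfrac12+it)\prod_{n=1}^{j}L(\tfrac12+it,\mathrm{sym}^{2n}f)\Big|\frac{dt}{t}
\]
and the analogous integral with the symmetric-power $L$-functions replaced by their $\chi$-twists (this one coming from $\mathcal{G}_j$) — each a product of $L$-functions of total degree $(j+1)^2$ on the critical line.

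The decisive step, and the one where the improvement over Theorem~A is gained, is to estimate this integral by a genuine mean-value bound rather than by the termwise convexity bound integrated trivially: splitting the product by Hölder's inequality and feeding in the classical fourth-power moment $\int_0^T|\zeta(\tfrac12+it)|^4\,dt\ll T^{1+\epsilon}$ (and the fourth moment of $L(s,\chi)$ for the twisted integral), the convexity-strength second moments $\int_0^T|L(\tfrac12+it,\mathrm{sym}^{2n}f)|^2\,dt\ll T^{(2n+1)/2+\epsilon}$, and subconvexity for $\zeta(\tfrac12+it)$ where it helps, one should reach a bound of shape $T^{\beta+\epsilon}$ with $\beta=\tfrac{3(j+1)^2-13}{12}$. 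The two horizontal segments are disposed of by choosing $T$ within a dyadic interval so that $|F(\sigma\pm iT)|$ does not exceed the average of $|F|$ over that interval, which makes the same estimate applicable; they never dominate. Inserting $x^{5/2+\epsilon}T^{\beta+\epsilon}$ for the vertical integral and balancing against the Perron error $x^{3+\epsilon}/T$ through the optimal choice $T=x^{6/(3(j+1)^2-1)}$ then yields the error term $O\big(x^{3-\frac{6}{3(j+1)^2-1}+\epsilon}\big)$, which is the assertion. I expect the main technical obstacle to lie exactly in making the mean-value estimate of this last paragraph sharp enough to produce the denominator $3(j+1)^2-1$ rather than $3(j+1)^2+1$.
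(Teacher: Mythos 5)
First, note that the paper does not prove this statement itself: Theorem B is Wang's result, quoted from \cite{Youjun}, and the paper's own contribution (Theorem 1.1) reuses the same machinery with a different contour ($\Re s=2+\tfrac58$, Ivi\'c's moment bound and the Lin--Nunes--Qi subconvexity for $L(s,\mathrm{sym}^2f)$). Your overall architecture --- Jacobi's identity for $r_6$, factorisation of the two Dirichlet series into $\zeta(s-2)L(s,\chi)\prod_n L(s-2,\mathrm{sym}^{2n}f)L(s,\mathrm{sym}^{2n}f\otimes\chi)$ times an absolutely convergent factor (and the mirror factorisation for the ``$-4$'' piece), truncated Perron, contour shift to $\Re s=\tfrac52+\epsilon$, residue at $s=3$, and a mean-value estimate on the new line --- is exactly the framework of Lemmas 2.1--2.4 and of the proof of Theorem 1.1, and your bookkeeping (degree $(j+1)^2$, target exponent $\beta=\tfrac{3(j+1)^2-13}{12}$, the choice $T=x^{6/(3(j+1)^2-1)}$) is consistent. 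A minor slip: on $\Re s=\tfrac52+\epsilon$ the term you call $\mathcal{G}_j(s)$ is \emph{not} $O((1+|t|)^\epsilon)$, since it contains $L(s-2,\chi)\prod_nL(s-2,\mathrm{sym}^{2n}f\otimes\chi)$ on the critical line; you implicitly correct this by introducing the second, twisted integral, so this is only a presentational inconsistency.

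The genuine gap is in the decisive step, and you have located but not closed it. The tools you actually name --- H\"older with the fourth moment $\int_0^T|\zeta(\tfrac12+it)|^4\,dt\ll T^{1+\epsilon}$ together with Perelli's second moments $\int_{T_1}^{2T_1}|L(\tfrac12+it,\mathrm{sym}^{2n}f)|^2\,dt\ll T_1^{(2n+1)/2+\epsilon}$ --- deliver only $\int_{T_1}^{2T_1}\bigl|\zeta\prod_nL_n\bigr|\,dt\ll T_1^{(j+1)^2/4+\epsilon}$ (the fourth moment gives $T^{1/4}$ for the $\zeta$-factor, which is exactly its convexity share and yields no saving), hence the denominator $3(j+1)^2$, not $3(j+1)^2-1$. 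The ingredient that produces the extra $T^{-1/12}$ is a \emph{pointwise} Weyl-type subconvexity bound for the single degree-one factor: one pulls out $\max_{T_1\le t\le 2T_1}|\zeta(\tfrac12+it)|\ll T_1^{1/6+\epsilon}$ (and, for the twisted integral, Heath-Brown's hybrid bound $L(\tfrac12+it,\chi)\ll((1+|t|)q)^{1/6+\epsilon}$, which is Lemma 2.6 of the paper at $\sigma=\tfrac12$) and then estimates the remaining product of degree $(j+1)^2-1$ by Cauchy--Schwarz and Perelli's second moment, giving
\begin{equation*}
\int_{T_1}^{2T_1}\Bigl|\zeta\bigl(\tfrac12+it\bigr)\prod_{n=1}^{j}L\bigl(\tfrac12+it,\mathrm{sym}^{2n}f\bigr)\Bigr|\,dt
\ll T_1^{\frac16+\frac{(j+1)^2-1}{4}+\epsilon}=T_1^{\frac{3(j+1)^2-1}{12}+\epsilon},
\end{equation*}
which is precisely what the exponent $6/(3(j+1)^2-1)$ requires. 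Since you flag this estimate as the ``main technical obstacle'' rather than supplying it, and the mechanism you foreground (the fourth moment) cannot supply it, the proposal as written does not yet prove the stated error term; with the $T^{1/6}$ sup bound substituted for the fourth moment, the rest of your argument goes through.
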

The aim of this article is to improve the error term in Theorem B for $j\geq 3$. Indeed we prove:
 
\begin{theorem}
Let $f\in H_k$ and $j\geq 3$. Then we have 
\begin{equation*}
    \sum_{\substack{n=a_1^2+a_2^2+\hdots+a_6^2\leq x\\\left(a_1,a_2,\hdots, a_6\right)\in \mathbb{Z}^6}}\lambda_{{\rm{sym}}^j  f}^2(n)=\mathcal{C}_{f, j} x^3+O\left(x^{3-\frac{30}{15(j+1)^2-14}+\epsilon}\right),
\end{equation*}
where $\displaystyle \mathcal{C}_{f,j}=\frac{16}{3} L(3, \chi)\prod_{n=1}^j L(1, {\rm{sym}}^{2n}f) L(3, {\rm{sym}}^{2n}f\otimes\chi)H_j(3)$, $\chi$ is the nonprincipal character modulo $4$ and $H_j(s)$ is a convergent Dirichlet series in lemma 2.2.\label{T1.1}
\end{theorem}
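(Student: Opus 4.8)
The plan is to reduce the sum to a Dirichlet-coefficient sum, analyse the resulting Dirichlet series, and then run a Perron--contour argument. \emph{Step 1.} The number of $(a_1,\dots,a_6)\in\mathbb Z^{6}$ with $a_1^2+\dots+a_6^2=n$ is the representation number $r_6(n)$, so the quantity in the theorem equals $\sum_{n\le x}r_6(n)\,\lambda_{\mathrm{sym}^j f}^2(n)$. I would then insert the classical exact formula $r_6(n)=16\sum_{d\mid n}\chi(n/d)d^2-4\sum_{d\mid n}\chi(d)d^2$, where $\chi$ is the nonprincipal character modulo $4$ (no cusp-form term occurs for six squares), so that everything is governed by the Dirichlet series $D(s)=\sum_{n\ge1}r_6(n)\lambda_{\mathrm{sym}^j f}^2(n)n^{-s}$; since $r_6(n)\ll n^{2+\epsilon}$ and $\lambda_{\mathrm{sym}^j f}(n)\ll n^{\epsilon}$ by Deligne, $D(s)$ converges absolutely for $\Re s>3$.

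\emph{Step 2.} Next I would pin down the analytic continuation of $D(s)$. Both $n\mapsto\sum_{d\mid n}\chi(n/d)d^2$ and $n\mapsto\lambda_{\mathrm{sym}^j f}^2(n)$ are multiplicative, so $D(s)$ has an Euler product; using the Clebsch--Gordan decomposition $L(s,\mathrm{sym}^j f\times\mathrm{sym}^j f)=\zeta(s)\prod_{n=1}^{j}L(s,\mathrm{sym}^{2n}f)$ (each factor entire, since $f$ has level one and $\mathrm{sym}^{2n}f$ is cuspidal) together with the standard identity for $\sum_k\lambda_{\mathrm{sym}^j f}(p^k)^2x^k$, one extracts $\zeta(s-2)$ from the weight $d^2$ and $L(s,\chi)$ from the $\chi$-twist, obtaining
\[
D(s)=16\,\zeta(s-2)\,L(s,\chi)\prod_{n=1}^{j}L(s-2,\mathrm{sym}^{2n}f)\,L(s,\mathrm{sym}^{2n}f\otimes\chi)\cdot H_j(s)+D_\flat(s),
\]
where $H_j(s)$ is the Dirichlet series of Lemma~2.2, absolutely convergent and of polynomial growth in a half-plane $\Re s>\sigma_0$ with $\sigma_0<3$ (the obstruction to making $\sigma_0$ small being $\zeta$- and symmetric-power-$L$-type factors in the variable $2s-4$, which either stay inside $H_j$ or are pulled out at the price of extra poles located at points $\le\tfrac52$), and $D_\flat(s)$, arising from the $-4\sum_{d\mid n}\chi(d)d^2$ term, is holomorphic and of polynomial growth for $\Re s>1$ apart from a pole at $s=1$. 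Since $L(s-2,\mathrm{sym}^{2n}f)$, $L(s,\chi)$, $L(s,\mathrm{sym}^{2n}f\otimes\chi)$ and $H_j(s)$ are all regular at $s=3$, the only singularity of $D(s)$ in $\Re s\ge\sigma_0$ (for $\sigma_0\in(1,3)$) is the simple pole at $s=3$ inherited from $\zeta(s-2)$, with residue $16\,L(3,\chi)\prod_{n=1}^{j}L(1,\mathrm{sym}^{2n}f)L(3,\mathrm{sym}^{2n}f\otimes\chi)H_j(3)=3\,\mathcal C_{f,j}$.

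\emph{Step 3.} Now I would apply the truncated Perron formula,
\[
\sum_{n\le x}r_6(n)\lambda_{\mathrm{sym}^j f}^2(n)=\frac1{2\pi i}\int_{3+\epsilon-iT}^{3+\epsilon+iT}D(s)\frac{x^{s}}{s}\,ds+O\!\left(\frac{x^{3+\epsilon}}{T}+x^{2+\epsilon}\right),
\]
and move the line of integration to $\Re s=\sigma_1$ for a suitable $\sigma_1\in(\sigma_0,3)$ to be optimised. This collects the main term $\mathcal C_{f,j}x^{3}$ from the pole at $s=3$, plus only $O(x^{5/2+\epsilon})$ from the possible secondary poles. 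On the shifted vertical line $L(s,\chi)$, $L(s,\mathrm{sym}^{2n}f\otimes\chi)$ and $H_j(s)$ are $\ll x^{\epsilon}$, while the product $\zeta(s-2)\prod_{n=1}^{j}L(s-2,\mathrm{sym}^{2n}f)$, of degree $1+\sum_{n=1}^{j}(2n+1)=(j+1)^2$, is to be estimated on vertical lines by the Phragm\'en--Lindel\"of principle, reinforced by the available subconvex and mean-value bounds for $\zeta$ and for symmetric-power $L$-functions --- this is where the improvement over Theorem~B is squeezed out --- giving a vertical contribution $\ll x^{\sigma_1}T^{\beta+\epsilon}$ with $\beta$ controlled by the combined analytic conductor, the horizontal segments being smaller. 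Optimising $T$ (and $\sigma_1$) so as to balance $x^{3+\epsilon}/T$ against $x^{\sigma_1}T^{\beta+\epsilon}$ then produces the exponent $3-\frac{30}{15(j+1)^2-14}+\epsilon$; since this exceeds $\tfrac52$, both the $O(x^{5/2+\epsilon})$ secondary terms and the $D_\flat$-contribution are absorbed into the error.

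\emph{Main obstacle.} The hard part will be the estimation of the shifted contour integral in Step~3: one must bound the degree-$(j+1)^2$ product $\zeta(s-2)\prod_{n=1}^{j}L(s-2,\mathrm{sym}^{2n}f)$ on vertical lines more efficiently than a crude convexity estimate permits, and this bound has to be balanced against the exact abscissa of absolute convergence of $H_j(s)$ furnished by Lemma~2.2; dovetailing these two ingredients optimally is what upgrades Theorem~B to the bound claimed here. A second, more routine but essential, point is Lemma~2.2 itself --- the precise Euler-product factorisation of $D(s)$ and the location of the convergence abscissa of $H_j(s)$ --- which rests on the representation theory of $\mathrm{sym}^j\otimes\mathrm{sym}^j$ and the arithmetic of the weights $\sum_{d\mid n}\chi(n/d)d^2$.
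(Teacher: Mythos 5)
Your overall strategy---decompose via $r_6(n)$ into two divisor-type sums, factor the associated Dirichlet series as in Lemma~2.2, apply truncated Perron, shift the contour past the simple pole at $s=3$, and balance the vertical-line contribution against $x^{3+\epsilon}/T$---is exactly the paper's. But the proposal has a genuine gap at precisely the point you flag as the ``main obstacle'': the whole content of the theorem is the specific value of the exponent $\beta$ in your bound $x^{\sigma_1}T^{\beta+\epsilon}$, and you never derive it. The paper gets it by a rather particular engineering: the contour is moved to $\Re(s)=2+\tfrac58+\epsilon$ (not an optimised $\sigma_1$; the value $\tfrac58$ is forced by the range of validity of Ivi\'c's moment theorem), and the integrand is split by H\"older into (i) an eighth-moment bound $\int_1^T|\zeta(\tfrac58+\epsilon+it)|^{8}\,dt\ll T^{1+\epsilon}$, (ii) the Lin--Nunes--Qi subconvexity $L(\tfrac12+\epsilon+it,\mathrm{sym}^2f)\ll(1+|t|)^{6/5\cdot(1/2)+\epsilon}$, (iii) Gabriel's convexity theorem (Lemma~2.9) interpolating the mean values of $\prod_{n=1}^{j-1}L(s,\mathrm{sym}^{2n}f)$ between $\sigma=\tfrac12$ and $\sigma=1$, and (iv) Perelli's mean-value theorem for the top factor $L(s,\mathrm{sym}^{2j}f)$. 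Without specifying this combination the exponent $3-\frac{30}{15(j+1)^2-14}$ is unexplained, so the proof is a plan rather than an argument.

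There is a second, concrete error: your claim that the contribution $D_\flat$ of the $-4\sum_{d\mid n}\chi(d)d^2$ term is ``holomorphic and of polynomial growth for $\Re s>1$'' and can be absorbed along with $O(x^{5/2+\epsilon})$ terms is not correct. By Lemma~2.3 the corresponding Dirichlet series is $\zeta(s)L(s-2,\chi)\prod_{n=1}^{j}L(s,\mathrm{sym}^{2n}f)L(s-2,\mathrm{sym}^{2n}f\otimes\chi)\widetilde H_j(s)$, where $\widetilde H_j$ is only known to converge for $\Re(s)>\tfrac52$; although there is no pole near $s=3$, the factor $L(s-2,\chi)\prod_n L(s-2,\mathrm{sym}^{2n}f\otimes\chi)$ is a degree-$(j+1)^2$ object evaluated near the critical strip once the contour is shifted, and its partial sums are in no way $O(x^{5/2+\epsilon})$. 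The paper must run a full second contour-shift (to $\Re(s)=2+\tfrac23$, using Heath-Brown's hybrid bound for $L(s,\chi)$, the $GL(3)$ subconvexity for $\mathrm{sym}^2f\otimes\chi$, and Perelli's mean value for the rest), and this produces an error term of exactly the same order $x^{3-\frac{30}{15(j+1)^2-14}}$ as the first sum; it is not negligible and cannot be waved away.
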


\begin{remark}
Note that $3-\frac{30}{15(j+1)^2-14}<3-\frac{6}{3(j+1)^2-1}$ for $j\geq 3$. Hence theorem \ref{T1.1} improves theorem \ref{TB}. One should notice that this improvement is valid for integers $j\geq 3$ and for $j=2$ theorem \ref{TB} is the best known result till now.
\end{remark}

\section{Lemmas}
\begin{lemma}
    Let $f\in H_k$, $j\geq 2$ and $\chi$ be the nonprincipal Dirichlet character modulo $4$. Then we have 
\begin{align*}
    \sum_{\substack{n=a_1^2+a_2^2+\hdots+a_6^2\leq x\\\left(a_1,a_2,\hdots, a_6\right)\in \mathbb{Z}^6}}\lambda_{{\rm{sym}}^j  f}^2(n)&=\sum_{n\leq x}\lambda_{{\rm{sym}}^jf}^2(n) r_6(n)\\
    &=16\sum_{n\leq x}\lambda_{{\rm{sym}}^jf}^2(n) l(n)-4\sum_{n\leq x}\lambda_{{\rm{sym}}^jf}^2(n) \upsilon(n),
\end{align*}
where $\displaystyle l(n)=\sum_{d|n}\chi(d)\frac{n^2}{d^2}$ and $\displaystyle \upsilon(n)=\sum_{d|n}\chi(d)d^2$.
\end{lemma}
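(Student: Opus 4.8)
The goal is to reduce the sixfold sum to a one-dimensional Dirichlet sum that can be handled by classical analytic machinery. The plan is to first invoke the classical formula for the number of representations of $n$ as a sum of six squares, namely $r_6(n) = 16\sum_{d\mid n}\chi(d)(n/d)^2 - 4\sum_{d\mid n}\chi(d)d^2$, where $\chi$ is the nonprincipal character modulo $4$. This identity is standard (it goes back to Jacobi/Glaisher and appears in e.g. Grosswald's book on representations by quadratic forms, or Hardy--Wright); I would quote it. Writing $l(n)$ and $\upsilon(n)$ for the two divisor-type sums as in the statement, the identity $r_6(n) = 16\,l(n) - 4\,\upsilon(n)$ holds for every positive integer $n$.

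From here the lemma is essentially bookkeeping. First I would observe that the left-hand side, the sum over lattice points $(a_1,\dots,a_6)\in\mathbb{Z}^6$ with $a_1^2+\cdots+a_6^2\le x$ weighted by $\lambda_{\mathrm{sym}^j f}^2(a_1^2+\cdots+a_6^2)$, can be reorganized by collecting together all lattice points lying on a fixed sphere: for each positive integer $n\le x$ the number of such points is exactly $r_6(n)$, and the weight $\lambda_{\mathrm{sym}^j f}^2(n)$ depends only on $n$. Hence the sixfold sum equals $\sum_{n\le x}\lambda_{\mathrm{sym}^j f}^2(n)\, r_6(n)$. (One should note the $n=0$ term contributes $\lambda_{\mathrm{sym}^j f}^2(0)\,r_6(0)$; since the Dirichlet-series normalization starts at $n=1$ this term is either absent or zero, so it does not affect the statement — a one-line remark suffices.) Then substituting $r_6(n) = 16\,l(n) - 4\,\upsilon(n)$ and splitting the sum by linearity gives exactly the claimed identity
\[
\sum_{n\le x}\lambda_{\mathrm{sym}^j f}^2(n)\, r_6(n) = 16\sum_{n\le x}\lambda_{\mathrm{sym}^j f}^2(n)\, l(n) - 4\sum_{n\le x}\lambda_{\mathrm{sym}^j f}^2(n)\, \upsilon(n).
\]

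There is no real obstacle here: the only substantive input is the exact formula for $r_6(n)$, which is a classical fact and not something this paper needs to prove. The mild point to be careful about is simply making sure the arithmetic functions $l(n)$ and $\upsilon(n)$ are written with the correct normalization (the factor $n^2/d^2$ versus $d^2$) so that they match the shapes that will later be fed into the associated Dirichlet series $\sum_n \lambda_{\mathrm{sym}^j f}^2(n) l(n) n^{-s}$ and $\sum_n \lambda_{\mathrm{sym}^j f}^2(n)\upsilon(n) n^{-s}$; getting the exponents right is what makes the subsequent contour-integration argument produce the main term $x^3$ rather than some other power. I would therefore present the proof in two short moves — (i) reindex the lattice sum as a sum over $n$ weighted by $r_6(n)$, and (ii) insert the classical six-squares formula and use linearity — and leave the analytic exploitation of the two resulting Dirichlet series to the later sections.
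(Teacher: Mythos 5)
Your proposal is correct and is exactly the standard argument: the paper itself disposes of this lemma by citing Lemma 2.1 of Sharma--Sankaranarayanan (2023), whose proof is precisely the reindexing of the lattice sum by $r_6(n)$ followed by the classical Jacobi--Glaisher identity $r_6(n)=16\,l(n)-4\,\upsilon(n)$ that you invoke. Your additional remark about the $n=0$ term and the normalization of $l(n)$ versus $\upsilon(n)$ is accurate and harmless.
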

\begin{proof}
    See lemma 2.1 of \cite{ASAS2023}.
\end{proof}

\begin{lemma}
    Let $f\in H_k$ and $j\geq 2$. Then for $\Re (s)>3$, we have 
    \begin{align*}
        F_{j}(s)&=\sum_{n=1}^\infty \frac{\lambda_{{\rm{sym}}^jf}^2(n) l(n)}{n^s}\\
        &=G_{j}(s)H_{j}(s),
    \end{align*}
     where
        \begin{equation*}
            G_j(s):=\zeta(s-2)L(s, \chi)\prod_{n=1}^j L(s-2, {\rm{sym}}^{2n}f) L(s, {\rm{sym}}^{2n}f\otimes\chi),
        \end{equation*}
        $\chi$ is the nonprincipal character modulo 4 and $H_j(s)$ is some Dirichlet series which converges absolutely and uniformly in the half plane $\Re (s)>\frac{5}{2}$, and $H_j(s)\neq 0$ on $\Re (s)=3$.
\end{lemma}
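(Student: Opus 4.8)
The plan is to pass from the arithmetic function $\lambda_{{\rm sym}^j f}^2(n)\, l(n)$ to an Euler product and match it, prime by prime, against the explicit product $G_j(s)$. First I would observe that $\lambda_{{\rm sym}^j f}^2$ is multiplicative (being the square of a multiplicative function) and $l(n)=\sum_{d\mid n}\chi(d)(n/d)^2$ is multiplicative as a Dirichlet convolution of the multiplicative functions $\chi$ and $n^2$; hence $n\mapsto \lambda_{{\rm sym}^j f}^2(n)\, l(n)$ is multiplicative and, in the region of absolute convergence $\Re(s)>3$ (which is where the Ramanujan bound $|\lambda_{{\rm sym}^j f}(n)|\ll_\varepsilon n^\varepsilon$ together with $l(n)\ll n^{2+\varepsilon}$ guarantees convergence), we have
\begin{equation*}
F_j(s)=\prod_p\left(\sum_{\nu=0}^\infty \frac{\lambda_{{\rm sym}^j f}^2(p^\nu)\, l(p^\nu)}{p^{\nu s}}\right).
\end{equation*}
The next step is to compute the local factor. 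Writing $\alpha=\alpha_f(p)$, the Hecke relation gives $\lambda_{{\rm sym}^j f}(p)=\sum_{m=0}^j \alpha^{j-2m}$, and more generally the Clebsch–Gordan / $\mathrm{SU}(2)$ decomposition yields $\lambda_{{\rm sym}^j f}(p)^2=\sum_{n=0}^{j}\lambda_{{\rm sym}^{2n}f}(p)$ and, after squaring and collecting, an identity expressing $\lambda_{{\rm sym}^j f}^2(p)$ as $1+\lambda_{{\rm sym}^2 f}(p)+\cdots$ plus contributions of the twisted symmetric powers; this is precisely the source of the factors $\zeta(s-2)$, $L(s,\chi)$, $L(s-2,{\rm sym}^{2n}f)$ and $L(s,{\rm sym}^{2n}f\otimes\chi)$ for $1\le n\le j$. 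Concretely, I would show that the degree-one and the ``$p^{-s}$-linear'' part of the local Euler factor of $F_j(s)$ agrees with that of $G_j(s)$, so that the quotient $H_j(s):=F_j(s)/G_j(s)$ has local factors of the shape $1+O(p^{-2s}\cdot p^{\text{(bounded power)}})$.

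The third step is to read off the convergence of $H_j(s)$. Since each local factor of $H_j$ differs from $1$ by a term whose leading contribution is of size $O_j(p^{4}\, p^{-2\Re(s)})$ — the power $p^4$ coming from the two factors of $l(p^\nu)\sim p^{2\nu}$ that survive in the $p^{-2s}$-coefficient, balanced against the absolutely convergent tail — the product $\prod_p$ converges absolutely and uniformly for $\Re(s)\ge \tfrac52+\delta$ for every $\delta>0$; here one uses $|\lambda_{{\rm sym}^j f}(p^\nu)|\le \nu^{O_j(1)}$ uniformly. For the nonvanishing on $\Re(s)=3$, I would note that on that line every local factor of $H_j(s)$ lies in a fixed disc of radius $<1$ around $1$ (for all but finitely many $p$, by the same size estimate, and the finitely many remaining factors are nonzero individually because they are explicit nonzero polynomials in $p^{-3}$, $\alpha_f(p)$), so the product is bounded away from $0$.

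The main obstacle I expect is the bookkeeping in the local computation: one must identify, for the prime $p$, exactly which combination of $\zeta$, $L(\cdot,\chi)$ and the $2j$ symmetric-power $L$-functions reproduces the full $p^{-s}$-coefficient of $\sum_\nu \lambda_{{\rm sym}^j f}^2(p^\nu) l(p^\nu) p^{-\nu s}$, and to verify that the resulting discrepancy $H_j$ really does start at order $p^{-2s}$ rather than $p^{-s}$ — in other words, checking that no further degree-one $L$-factor has been overlooked. This is a finite but intricate $\mathrm{SU}(2)$-character computation; once it is carried out (or cited from \cite{ASAS2023}, where the analogous factorization for $F_j$ without the twist is treated), the analytic claims about $H_j(s)$ follow immediately from the Ramanujan bound and a routine Euler-product estimate, so I would keep that part brief and refer back to the cited computation for the combinatorial identity.
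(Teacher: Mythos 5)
The paper offers no proof of this lemma beyond the citation ``See lemma 2.2 of \cite{ASAS2023}'', and your outline is precisely the standard Euler-product argument that the cited lemma carries out: multiplicativity of $\lambda_{{\rm sym}^jf}^2(n)l(n)$, the Clebsch--Gordan identity $\lambda_{{\rm sym}^jf}(p)^2=\sum_{n=0}^{j}\lambda_{{\rm sym}^{2n}f}(p)$ combined with $l(p)=p^2+\chi(p)$ to match the $p^{-s}$-coefficient of the local factor of $F_j$ with that of $G_j$, and the resulting bound $1+O_j(p^{4-2\Re(s)})$ for the local factors of $H_j=F_j/G_j$, which gives absolute and uniform convergence for $\Re(s)>\frac52+\delta$. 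That is correct and is essentially the same approach. The only thin spot is the nonvanishing on $\Re(s)=3$: your disc-around-$1$ argument handles all but finitely many primes, but the local factor of $H_j$ at a small prime is a ratio of rational functions of $p^{-s}$ rather than an ``explicit nonzero polynomial'', and for small $p$ the crude bound $|F_p(s)-1|\le\sum_{\nu\ge1}\lambda_{{\rm sym}^jf}^2(p^\nu)l(p^\nu)p^{-3\nu}$ need not be less than $1$; one should instead use positivity of the coefficients at the real point $s=3$ (which is all the residue computation requires) or argue separately at those primes, as the cited reference does.
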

\begin{proof}
    See lemma 2.2 of \cite{ASAS2023}.
\end{proof}

\begin{lemma}
     Let $f\in H_k$ and $j\geq 2$. Then for $\Re (s)>3$, we have 
    \begin{align*}
        \widetilde{F}_{j}(s)&=\sum_{n=1}^\infty \frac{\lambda_{{\rm{sym}}^jf}^2(n) \upsilon(n)}{n^s}\\
        &= \widetilde{G}_{j}(s) \widetilde{H}_{j}(s),
    \end{align*}
     where
        \begin{equation*}
             \widetilde{G}_j(s):=\zeta(s)L(s-2, \chi)\prod_{n=1}^j L(s, {\rm{sym}}^{2n}f) L(s-2, {\rm{sym}}^{2n}f\otimes\chi),
        \end{equation*}
        $\chi$ is the nonprincipal character modulo 4 and $ \widetilde{H}_j(s)$ is some Dirichlet series which converges absolutely and uniformly in the half plane $\Re (s)>\frac{5}{2}$, and $H_j(s)\neq 0$ on $\Re (s)=3$.
\end{lemma}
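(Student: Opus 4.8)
The plan is to establish the factorization at the level of Euler products, in complete analogy with the proof of Lemma 2.2 (the $l(n)$ case). Since $\lambda_{\mathrm{sym}^jf}^2$ is multiplicative and $\upsilon(n)=\sum_{d\mid n}\chi(d)d^2$ is multiplicative, the product $\lambda_{\mathrm{sym}^jf}^2(n)\upsilon(n)$ is multiplicative; together with the Deligne bound $\lambda_{\mathrm{sym}^jf}(n)\ll n^{\epsilon}$ and $\upsilon(n)\ll n^{2+\epsilon}$, this gives an absolutely convergent Euler product $\widetilde{F}_j(s)=\prod_p\widetilde{F}_{j,p}(s)$ for $\Re(s)>3$, where $\widetilde{F}_{j,p}(s)=\sum_{\nu\ge0}\lambda_{\mathrm{sym}^jf}^2(p^\nu)\upsilon(p^\nu)p^{-\nu s}$. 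As the level is $1$, every prime is unramified, so it suffices to analyse each local factor and compare it with the local Euler factor of $\widetilde{G}_j(s)$.

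Write $x=p^{-s}$, $u=\chi(p)p^2$, and set $A_p(x)=\sum_{\nu\ge0}\lambda_{\mathrm{sym}^jf}^2(p^\nu)x^\nu$. The key input, the same one underlying Lemma 2.2 and coming from the Rankin--Selberg decomposition $\mathrm{sym}^jf\otimes\mathrm{sym}^jf=\bigoplus_{n=0}^j\mathrm{sym}^{2n}f$, is that $A_p(x)=P_{j,p}(x)/D_p(x)$, where $D_p(x)^{-1}$ is the local Euler factor of $\prod_{n=0}^jL(s,\mathrm{sym}^{2n}f)$ (so $D_p(x)^{-1}$ equals the local factor of $\zeta(s)\prod_{n=1}^jL(s,\mathrm{sym}^{2n}f)$), and $P_{j,p}$ is a polynomial of bounded degree with $P_{j,p}(0)=1$. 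Moreover $P_{j,p}(x)=1+O(x^2)$: the coefficient of $x$ in $A_p$ is $\lambda_{\mathrm{sym}^jf}^2(p)$, that in $D_p^{-1}$ is $\sum_{n=0}^j\lambda_{\mathrm{sym}^{2n}f}(p)$, and these agree by the prime-level Clebsch--Gordan identity $\lambda_{\mathrm{sym}^jf}^2(p)=\sum_{n=0}^j\lambda_{\mathrm{sym}^{2n}f}(p)$. Since $\upsilon(p^\nu)=\sum_{i=0}^{\nu}u^i=(u^{\nu+1}-1)/(u-1)$, summing termwise yields
\[
\widetilde{F}_{j,p}(s)=\frac{u\,A_p(ux)-A_p(x)}{u-1}.
\]
The substitution $x\mapsto ux=\chi(p)p^{-(s-2)}$ turns the local factor $D_p(x)^{-1}$ into the local factor of $L(s-2,\chi)\prod_{n=1}^jL(s-2,\mathrm{sym}^{2n}f\otimes\chi)$, so $\widetilde{G}_{j,p}(s)=D_p(x)^{-1}D_p(ux)^{-1}$ is exactly the local Euler factor of $\widetilde{G}_j(s)$.

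Dividing, the analytically delicate factors cancel and leave the polynomial
\[
\widetilde{H}_{j,p}(s)=\frac{\widetilde{F}_{j,p}(s)}{\widetilde{G}_{j,p}(s)}=\frac{u\,P_{j,p}(ux)D_p(x)-P_{j,p}(x)D_p(ux)}{u-1}.
\]
Its constant term is $1$, and a short computation using $P_{j,p}(x)=1+O(x^2)$ shows that the coefficient of $x$ cancels identically. The leading correction is then the $x^2$-term, whose coefficient is $O(u^2)=O(p^4)$ because the Hecke parameters $\alpha_f(p)^{j-2m}$ have modulus $1$ (all coefficients of $P_{j,p}$ and $D_p$ are $O(1)$, and the $x^k$-coefficient of $\widetilde{H}_{j,p}$ inherits at most a factor $O(u^k)=O(p^{2k})$). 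Hence $\widetilde{H}_{j,p}(s)=1+O(p^{4-2\Re(s)})$, and each $x^k$-coefficient contributes $O(p^{k(2-\Re(s))})$; since the degree is bounded in terms of $j$, $\widetilde{H}_j(s)=\prod_p\widetilde{H}_{j,p}(s)$ converges absolutely and uniformly for $\Re(s)>\tfrac52$. On $\Re(s)=3$ one has $\widetilde{H}_{j,p}=1+O(p^{-2})$, so all but finitely many factors are nonzero; the finitely many remaining factors are checked directly, and at $p=2$, where $\chi(2)=0$ and $u=0$, the factor reduces to $\widetilde{H}_{j,2}(s)=P_{j,2}(2^{-s})=1+O(2^{-2s})$. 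This gives $\widetilde{H}_j(s)\neq0$ on $\Re(s)=3$.

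The main obstacle is the first ingredient of the second paragraph: proving that $A_p(x)$ factors as a bounded-degree correction polynomial $P_{j,p}$ over the local factor $D_p(x)^{-1}$, with the linear term of $P_{j,p}$ vanishing. This is precisely the content carried over from Lemma 2.2 and ultimately rests on symmetric-function identities for the parameters $\{\alpha_f(p)^{j-2m}\}_{m=0}^j$; once it is in place, the insertion of $\upsilon$ is the elementary geometric-series manipulation above, and the only genuinely new bookkeeping is tracking the $p^2$-shift encoded in $u=\chi(p)p^2$. In contrast with Lemma 2.2, this shift lands on the $\chi$-twisted factors $L(s-2,\chi)$ and $L(s-2,\mathrm{sym}^{2n}f\otimes\chi)$, and it is exactly this $p^2$ weighting that raises the abscissa of absolute convergence of the correction series from $\Re(s)>\tfrac12$ (the untwisted Rankin--Selberg case) to $\Re(s)>\tfrac52$.
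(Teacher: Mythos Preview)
Your argument is correct. The paper itself supplies no proof here---it simply cites Lemma~2.3 of \cite{ASAS2023}---so there is nothing substantive to compare against. Your Euler-product computation (factoring the local Rankin--Selberg generating series $A_p(x)=P_{j,p}(x)/D_p(x)$ via the Clebsch--Gordan decomposition $\mathrm{sym}^j\otimes\mathrm{sym}^j=\bigoplus_{n=0}^j\mathrm{sym}^{2n}$, then tracking the shift $x\mapsto\chi(p)p^{2}x$ induced by $\upsilon$, and using the vanishing of the linear term of $P_{j,p}$ to push the abscissa down to $\tfrac52$) is exactly the standard method and is in substance what that reference carries out. The only soft spot is the nonvanishing of $\widetilde{H}_j$ on $\Re(s)=3$: ``checked directly'' for the finitely many small primes is the usual hand-wave in this literature and is acceptable at this level, though a fully rigorous treatment would require an explicit verification.
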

\begin{proof}
    See lemma 2.3 of \cite{ASAS2023}.
\end{proof}

\begin{lemma}
Suppose that $\mathfrak{L}(s)$ is a general $L$-function of degree $m$. Then, for any $\epsilon>0,$ we have 
\begin{equation}
    \int_T^{2T}|\mathfrak{L}(\sigma+it)|^2  \ dt\ll T^{\max\{m(1-\sigma),1\}+\epsilon}\label{E1}
\end{equation}
uniformly for $\frac{1}{2}\leq \sigma \leq1$ and $| t|\geq 1$; and 
\begin{equation}
    \mathfrak{L}(\sigma+it)\ll(|t|+1)^{\frac{m}{2}(1-\sigma)+\epsilon}\label{E2}
\end{equation}
uniformly for $\frac{1}{2}\leq \sigma \leq1+\epsilon$ and $| t|\geq 1$.
\end{lemma}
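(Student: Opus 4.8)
The plan is to establish the two bounds separately: the pointwise estimate \eqref{E2} by the Phragmén--Lindelöf convexity principle, and the mean-square estimate \eqref{E1} by an approximate functional equation combined with a mean value theorem for Dirichlet polynomials. Throughout I use the standard structural features of a degree-$m$ $L$-function: a Dirichlet series $\mathfrak{L}(s)=\sum_{n\ge 1}a(n)n^{-s}$ converging absolutely for $\Re(s)>1$; an analytic continuation (meromorphic with only the finitely many known poles); a functional equation $\Lambda(s)=\mathfrak{q}^{s/2}\gamma(s)\mathfrak{L}(s)=\varepsilon\,\overline{\Lambda(1-\bar s)}$, where the gamma factor $\gamma(s)=\prod_{j=1}^{m}\Gamma_{\mathbb{R}}(s+\kappa_j)$ has degree $m$; and the Rankin--Selberg mean-square bound $\sum_{n\le x}|a(n)|^2\ll x^{1+\epsilon}$.

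For \eqref{E2} I would argue as follows. On the line $\Re(s)=1+\epsilon$, absolute convergence gives $\mathfrak{L}(s)\ll 1$. On the line $\Re(s)=-\epsilon$, the functional equation gives $\mathfrak{L}(s)=X(s)\,\overline{\mathfrak{L}(1-\bar s)}$ with $X(s)=\varepsilon\,\mathfrak{q}^{(1-2s)/2}\gamma(1-s)/\gamma(s)$; Stirling's formula yields $|X(\sigma+it)|\asymp(|t|+1)^{\frac{m}{2}(1-2\sigma)}$, so that $\mathfrak{L}(-\epsilon+it)\ll(|t|+1)^{\frac{m}{2}(1+2\epsilon)}$ since $\mathfrak{L}(1+\epsilon-it)\ll 1$. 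Interpolating these two bounds across the strip $-\epsilon\le\sigma\le 1+\epsilon$ by the Phragmén--Lindelöf principle produces the exponent $\frac{m}{2}(1-\sigma)+O(\epsilon)$, which is exactly \eqref{E2}.

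For \eqref{E1}, fix $t\asymp T$, so the analytic conductor satisfies $\mathfrak{q}(\sigma+it)\asymp(|t|+1)^{m}$ and the balanced approximate functional equation represents $\mathfrak{L}(\sigma+it)$ as a main Dirichlet sum $\sum_{n\le N}a(n)n^{-\sigma-it}$ plus a reflected dual sum of the same length $N\asymp T^{m/2}$, up to a negligible tail. Applying the Montgomery--Vaughan mean value theorem to the main sum gives
\[
\int_T^{2T}\left|\sum_{n\le N}\frac{a(n)}{n^{\sigma+it}}\right|^2 dt \ll T\sum_{n\le N}\frac{|a(n)|^2}{n^{2\sigma}} + \sum_{n\le N}\frac{|a(n)|^2}{n^{2\sigma-1}}.
\]
By partial summation together with $\sum_{n\le x}|a(n)|^2\ll x^{1+\epsilon}$, for $\tfrac12\le\sigma\le 1$ the first term is $\ll T^{1+\epsilon}$ while the second is $\ll N^{2-2\sigma+\epsilon}=T^{m(1-\sigma)+\epsilon}$. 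The dual sum is treated the same way: writing its contribution as $\int_T^{2T}|X(\sigma+it)|^2|\sum_{n\le N}\overline{a(n)}\,n^{-(1-\sigma)-it\cdot(-1)}|^2\,dt$, using $|X(\sigma+it)|^2\asymp T^{m(1-2\sigma)}$ and applying Montgomery--Vaughan at real part $1-\sigma$, one finds the same order $\ll T^{\max\{1,\,m(1-\sigma)\}+\epsilon}$. Summing the two contributions yields \eqref{E1}.

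The main obstacle is \eqref{E1}, and within it two technical inputs. First, the balanced approximate functional equation must be set up with admissible smooth cutoffs, and one must verify via Stirling's estimates and the symmetry of the conductor under $s\mapsto 1-s$ that the reflected dual sum contributes at the same order as the main sum. Second, and more fundamentally, the Rankin--Selberg bound $\sum_{n\le x}|a(n)|^2\ll x^{1+\epsilon}$ is the arithmetic heart of the argument: it is precisely what collapses the Montgomery--Vaughan diagonal term to $T^{1+\epsilon}$ for a general degree-$m$ $L$-function, and it relies on the analytic behaviour of the Rankin--Selberg convolution $\mathfrak{L}\times\overline{\mathfrak{L}}$ near $s=1$. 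By contrast, the convexity bound \eqref{E2} is routine once the functional equation and Stirling's formula are in hand.
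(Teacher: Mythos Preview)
Your sketch is correct and follows the standard route; indeed the paper does not supply a proof at all but simply cites Perelli for \eqref{E1} and remarks that \eqref{E2} ``follows from the maximum modulus principle.'' What you call the Phragm\'en--Lindel\"of convexity argument is precisely that maximum-modulus step, and your approximate functional equation plus Montgomery--Vaughan mean value argument is exactly how Perelli obtains the second-moment bound for his class of general $L$-functions. The only point worth flagging is that the coefficient bound $\sum_{n\le x}|a(n)|^2\ll x^{1+\epsilon}$, which you rightly identify as the crux, is an \emph{axiom} in Perelli's framework rather than a theorem, so in this lemma it is taken as part of the definition of ``general $L$-function of degree $m$'' rather than derived from Rankin--Selberg theory.
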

\begin{proof}
    The result \eqref{E1} is due to Perelli \cite{Perelli} and \eqref{E2} follows from Maximum modulus principle.
\end{proof}

 For some L-functions with small degrees, we invoke either individual or average subconvexity bounds.

\begin{lemma}
For $\frac{5}{8}\leq \sigma\leq\frac{35}{54}$, we have \begin{equation}
    \int_1^T|\zeta\left(\sigma+it\right)|^{m(\sigma)}\ dt\ll T^{1+\epsilon},\ m(\sigma)\geq \frac{10}{5-6\sigma},\label{E3}
\end{equation} uniformly for $T\geq1$.
\end{lemma}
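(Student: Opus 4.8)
The plan is to treat this lemma as a power‑moment estimate for $\zeta$ on a vertical line inside the critical strip, of exactly the kind catalogued in Ivi\'c's monograph on the Riemann zeta‑function, where the function $m(\sigma)=\sup\{A:\int_1^T|\zeta(\sigma+it)|^A\,dt\ll T^{1+\epsilon}\}$ is bounded below piecewise and the piece $\tfrac{10}{5-6\sigma}$ is precisely the one valid on $[\tfrac58,\tfrac{35}{54}]$; in practice I would simply quote that result. To reconstruct a proof I would use the classical approximate‑functional‑equation method together with known moments on the critical line.

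First I would reduce to a dyadic block: splitting $[1,T]$ into $O(\log T)$ intervals $[T_1,2T_1]$ with $T_1\le T$, it suffices to show $\int_{T_1}^{2T_1}|\zeta(\sigma+it)|^{A}\,dt\ll T_1^{1+\epsilon}$ for $A=\tfrac{10}{5-6\sigma}$, and a single application of H\"older then gives the same bound for every exponent $\le A$. On such a block I would insert the approximate functional equation $\zeta(\sigma+it)=\sum_{n\le x}n^{-\sigma-it}+\chi(\sigma+it)\sum_{n\le y}n^{\sigma-1+it}+O\big(x^{-\sigma}+t^{1/2-\sigma}y^{\sigma-1}\big)$ with $xy\asymp t$ and $|\chi(\sigma+it)|\asymp t^{1/2-\sigma}$, choosing the split parameter $x$ (hence $y\asymp t/x$) so that the error term is negligible, which for $\sigma\in[\tfrac58,\tfrac{35}{54}]$ still leaves a whole interval of admissible $x$. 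Up to that negligible error one is reduced to $|\zeta(\sigma+it)|\ll |P(t)|+T_1^{1/2-\sigma}|Q(t)|+1$, where $P(t)$ and $Q(t)$ are Dirichlet polynomials of lengths $x$ and $y$ with coefficients $n^{-\sigma}$ and $n^{\sigma-1}$ respectively.

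The core is then to bound $\int_{T_1}^{2T_1}|P(t)|^A\,dt$ and $T_1^{A(1/2-\sigma)}\int_{T_1}^{2T_1}|Q(t)|^A\,dt$, with $A\in[8,9]$. For each I would raise the polynomial to a suitable integral power, expand it into a longer Dirichlet polynomial with divisor‑type coefficients, estimate its even moments by the Montgomery--Vaughan mean value theorem reinforced where necessary by Hal\'asz--Montgomery large‑values estimates, and feed in the known moments on the critical line --- the fourth moment $\int_0^T|\zeta(\tfrac12+it)|^4\,dt\ll T\log^4T$ and Heath‑Brown's twelfth moment $\int_0^T|\zeta(\tfrac12+it)|^{12}\,dt\ll T^{2+\epsilon}$ --- interpolated by H\"older to the fractional orders that actually occur and transferred off the critical line through the $\chi$‑factor. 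Optimising the choice of $x$ against the H\"older exponents is what forces the constraint $A\le\tfrac{10}{5-6\sigma}$, and $\sigma=\tfrac58$ and $\sigma=\tfrac{35}{54}$ delimit the range over which this particular combination of ingredients is the operative one among the known lower bounds for $m(\sigma)$.

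The step I expect to be the real obstacle is the mean‑value bookkeeping in the core: because the dual polynomial $Q(t)$ has length $y\asymp t/x$ that can be a fixed power of $T_1$, the trivial mean value theorem does not suffice and one must bring in the large‑values machinery, and it is the interplay between those estimates and the chosen $x$ that has to be balanced exactly in order to land on the exponent $1+\epsilon$ rather than something slightly larger. As all the ingredients are standard, the efficient course for the paper is to cite Ivi\'c.
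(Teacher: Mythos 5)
The paper's entire proof of this lemma is a citation of Theorem 8.4 of Ivi\'c, which is exactly what you conclude is the efficient course; your identification of the result as one of the standard piecewise lower bounds for $m(\sigma)$, with the piece $\tfrac{10}{5-6\sigma}$ operative on $[\tfrac58,\tfrac{35}{54}]$, is correct. Your accompanying sketch (approximate functional equation, dyadic blocks, fourth and twelfth moments on the critical line interpolated by H\"older, large-values estimates) is a faithful outline of how Ivi\'c actually proves it, but the paper does not reproduce any of that, so your proposal matches the paper's approach.
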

\begin{proof}
See theorem $8.4$  of \cite{Ivic}.
\end{proof}
\begin{lemma}
    Let $\chi$ be a primitive character modulo $q$. Then for $q\ll T^2$, we have
\begin{equation}
	L(\sigma+it,\ \chi)\ll (q(1+| t|))^{\max\{\frac{1}{3}(1-\sigma),\ 0\}+\epsilon}\label{E4}
\end{equation}
		holds uniformly for $\frac{1}{2}\leq\sigma\leq 2$ and $| t|\geq 1$;
\end{lemma}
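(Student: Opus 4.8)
The plan is to reduce the stated bound to two boundary estimates and then interpolate. Write $Q = q(1+|t|)$ for the hybrid analytic conductor. Since $\chi$ is a primitive character modulo $q>1$, the function $L(s,\chi)$ is entire of finite order, and the exponent $\max\{\tfrac13(1-\sigma),\,0\}$ is precisely the linear interpolation between the value $\tfrac16$ at $\sigma=\tfrac12$ and the value $0$ at $\sigma=1$, continued by $0$ for $\sigma\ge 1$. By the Phragm\'en--Lindel\"of convexity principle, in the form that the Lindel\"of function $\sigma\mapsto\mu_\chi(\sigma)$, defined as the infimum of exponents $c$ with $L(\sigma+it,\chi)\ll_\epsilon Q^{c+\epsilon}$, is convex and non-increasing, it therefore suffices to prove the two endpoint bounds
\[ L(\tfrac12+it,\chi)\ll_\epsilon Q^{1/6+\epsilon}\qquad\text{and}\qquad L(1+it,\chi)\ll_\epsilon Q^{\epsilon}. \]
Convexity then forces $\mu_\chi(\sigma)\le\tfrac13(1-\sigma)$ on $[\tfrac12,1]$ and $\mu_\chi(\sigma)=0$ on $[1,2]$, which is exactly the claim. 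The hypothesis $q\ll T^2$ (with $|t|\asymp T$) serves only to keep $Q$ polynomially bounded in $T$, so that all $\epsilon$-losses are uniform.

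The second endpoint is routine: for $\sigma=1$ (indeed for any $\sigma\ge 1$), partial summation combined with the P\'olya--Vinogradov estimate for incomplete character sums, or alternatively the standard zero-free region, gives $L(\sigma+it,\chi)\ll\log Q\ll_\epsilon Q^{\epsilon}$, so that $\mu_\chi(1)=0$.

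The crux is the first endpoint, the hybrid Weyl bound on the critical line. First I would invoke the approximate functional equation to write
\[ L(\tfrac12+it,\chi)=\sum_{n\le N}\frac{\chi(n)}{n^{1/2+it}}+(\text{dual sum})+O\!\left(Q^{-A}\right),\qquad N\asymp Q^{1/2}, \]
the dual sum having the same shape and length. After a dyadic decomposition and partial summation to remove the factor $n^{-1/2}$, it suffices to bound $\sum_{M<n\le 2M}\chi(n)\,n^{-it}$ for $M\le N$. Here I would apply van der Corput's method to the smooth phase $n^{-it}=e^{-it\log n}$: one Weyl differencing step produces the character correlation $\sum_n\chi(n)\overline{\chi(n+h)}$, which I would complete modulo $q$ (a $B$-process, i.e.\ Poisson summation bringing in Gauss sums of modulus $q^{1/2}$), followed by an estimate of the resulting dual exponential sum. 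The arithmetic and analytic savings combine to give the Weyl exponent $Q^{1/6+\epsilon}$ uniformly in the product $Q=q(1+|t|)$.

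The main obstacle will be securing this uniformity in $q$ and $t$ simultaneously, in particular in the ranges where $q$ and $|t|$ are of very different sizes, since the completion step must be balanced against the length $N\asymp(q(1+|t|))^{1/2}$ and the Gauss sums controlled for all moduli. For the character $\chi$ modulo $4$ used later, $q$ is fixed and only the $t$-aspect intervenes, whence the estimate reduces to the classical Weyl bound obtained from the van der Corput exponent pair $(\tfrac16,\tfrac23)$ for $\zeta$-type $L$-functions; the fully uniform hybrid statement for general primitive $\chi$ rests on the cubefree-modulus bounds of Heath-Brown and Conrey--Iwaniec and, in complete generality, on the recent work of Petrow--Young. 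With both endpoint bounds established, the convexity interpolation of the first paragraph completes the proof.
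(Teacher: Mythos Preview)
The paper does not actually prove this lemma; it simply refers the reader to Heath-Brown's hybrid-bound paper. Your proposal therefore goes considerably further than the paper: you outline a genuine argument via Phragm\'en--Lindel\"of reduction to the endpoints $\sigma=1$ (routine) and $\sigma=\tfrac12$ (the hybrid Weyl bound), followed by the approximate functional equation and a van der Corput differencing/completion treatment of the resulting character--exponential sums. That outline is correct in spirit, and you are right to flag the real difficulty: obtaining the Weyl-strength exponent $\tfrac16$ uniformly in the analytic conductor $q(1+|t|)$ is deep. In fact Heath-Brown's 1978 paper yields the hybrid exponent $\tfrac{3}{16}$ rather than $\tfrac16$, so the lemma as stated is, for general primitive $\chi$, slightly stronger than the cited reference and only becomes available with Petrow--Young, exactly as you note at the end. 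For the paper's application this over-statement is harmless: the lemma is used only for the fixed non-principal character modulo $4$, where the estimate reduces to the classical $t$-aspect Weyl bound obtainable from the exponent pair $(\tfrac16,\tfrac23)$, again as you correctly point out.
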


\begin{proof}
    See \cite{Heath-Brown}.
\end{proof}
\begin{lemma}
For $f\in H_k$ and $\epsilon>0$, we have 
\begin{equation}
    L(\sigma+it,\ {\rm{sym}}^2 f)\ll_{f,\epsilon}(|t|+1)^{\max\{\frac{6}{5}(1-\sigma),0\}+\epsilon}\label{E5}
\end{equation}
uniformly for $\frac{1}{2}\leq \sigma \leq2$ and $| t|\geq 1$.
\end{lemma}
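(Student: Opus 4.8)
The plan is to reduce the estimate on the whole strip $\tfrac12\le\sigma\le 2$ to a single genuinely hard input, namely a subconvex bound on the central line $\sigma=\tfrac12$, and then to propagate that bound through the strip by the Phragm\'en--Lindel\"of convexity principle. The structural facts I would invoke at the outset are that, the level being one, $f$ is not of dihedral (CM) type, so by Gelbart--Jacquet $\mathrm{sym}^2 f$ is a cuspidal automorphic representation of $GL(3)$; consequently $L(s,\mathrm{sym}^2 f)$ is entire, of finite order (polynomial growth in every vertical strip), and satisfies a functional equation $s\leftrightarrow 1-s$ with archimedean conductor $\asymp(1+|t|)^3$. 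These are exactly the hypotheses needed to run Phragm\'en--Lindel\"of.

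First I would dispose of the range $1\le\sigma\le 2$, where the target exponent $\max\{\tfrac65(1-\sigma),0\}$ is $0$. For $\sigma>1$ the Dirichlet series $\sum_n\lambda_{\mathrm{sym}^2 f}(n)n^{-s}$ converges absolutely (its coefficients being $\ll_\epsilon n^\epsilon$ by Deligne and Rankin--Selberg), so $L(\sigma+it,\mathrm{sym}^2 f)\ll_\epsilon 1$; and on the line $\sigma=1$ the convexity bound---Phragm\'en--Lindel\"of applied directly to the functional equation of this degree-$3$ $L$-function---gives $L(1+it,\mathrm{sym}^2 f)\ll_{f,\epsilon}(|t|+1)^\epsilon$. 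It therefore remains to interpolate between $\sigma=1$ and $\sigma=\tfrac12$, once the endpoint bound at $\sigma=\tfrac12$ is in hand.

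The crux is the critical-line estimate $L(\tfrac12+it,\mathrm{sym}^2 f)\ll_{f,\epsilon}(|t|+1)^{3/5+\epsilon}$; note that $\tfrac65(1-\tfrac12)=\tfrac35$, so this is precisely the value the interpolation requires. The route I would take uses the approximate functional equation, which writes $L(\tfrac12+it,\mathrm{sym}^2 f)$---up to a dual term of the same shape produced by the functional equation---as a smoothly weighted sum $\sum_{n\ll(1+|t|)^{3/2}}\lambda_{\mathrm{sym}^2 f}(n)n^{-1/2-it}$. I would split this at a parameter $M$: on $n\le M$ bound trivially by Cauchy--Schwarz and the Rankin--Selberg bound $\sum_{n\le x}|\lambda_{\mathrm{sym}^2 f}(n)|^2\ll_f x^{1+\epsilon}$, which contributes $\ll M^{1/2+\epsilon}$; to each dyadic block $N<n\le 2N$ with $M<N\ll(1+|t|)^{3/2}$ apply the $GL(3)$ (equivalently the classical $\mathrm{sym}^2$) Voronoi summation formula, which converts it into a dual sum of length $\asymp(1+|t|)^3/N$ against an explicit Bessel/Meijer-$G$ oscillatory kernel, to be estimated by stationary phase together with a second-derivative (van der Corput) bound. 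Optimizing in $M$ then yields the exponent $3/5$. Alternatively one may simply quote this subconvex bound, or the essentially equivalent estimate $\sum_{n\le x}\lambda_f(n^2)\ll_f x^{3/5+\epsilon}$, from the literature. This is the step I expect to be the main obstacle; everything else is soft.

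Finally I would interpolate: applying the Phragm\'en--Lindel\"of principle in quantitative form to $L(s,\mathrm{sym}^2 f)$ on the strip $\tfrac12\le\Re s\le 1+\epsilon$, with the edge bounds just obtained, shows that the Lindel\"of exponent $\mu(\sigma)$ of $L(\sigma+it,\mathrm{sym}^2 f)$ is a convex function with $\mu(\sigma)=0$ for $\sigma\ge 1$ and $\mu(\tfrac12)\le\tfrac35$. Convexity then forces $\mu(\sigma)\le\tfrac35\cdot\dfrac{1-\sigma}{1/2}=\tfrac65(1-\sigma)$ for $\tfrac12\le\sigma\le 1$ and $\mu(\sigma)=0=\max\{\tfrac65(1-\sigma),0\}$ for $1\le\sigma\le 2$, which is the assertion of the lemma; the quantitative form of the principle, together with the trivial bound $O_f(1)$ on any range $1\le|t|\le T_0$, yields the stated uniformity in $t$ for $|t|\ge 1$.
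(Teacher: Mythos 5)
Your overall architecture is sound and, in its fallback form, coincides with what the paper actually does: the paper's entire ``proof'' of this lemma is a citation to Lin--Nunes--Qi, \emph{Strong subconvexity for self-dual $GL(3)$ $L$-functions} (IMRN 2023), whose main theorem is precisely the central-line bound $L(\tfrac12+it,\mathrm{sym}^2 f)\ll_{f,\epsilon}(|t|+1)^{3/5+\epsilon}$; the extension to the strip $\tfrac12\le\sigma\le 2$ via absolute convergence for $\sigma>1$ and Phragm\'en--Lindel\"of interpolation is exactly the standard soft step you describe, and your treatment of it (cuspidality of $\mathrm{sym}^2 f$ by Gelbart--Jacquet for non-dihedral $f$, analytic continuation, functional equation, convexity of $\mu(\sigma)$) is correct.

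The genuine gap is in your sketch of the hard input. The argument you outline --- approximate functional equation of length $(1+|t|)^{3/2}$, trivial/Rankin--Selberg bound below $M$, $GL(3)$ Voronoi on dyadic blocks above $M$, second-derivative van der Corput on the dual sum, optimize in $M$ --- does not yield the exponent $3/5$. Voronoi applied to a block of length $N\gg (1+|t|)^{3/2}$ returns a dual sum of comparable difficulty (the conductor is $\asymp(1+|t|)^3$, so the self-dual length is exactly $(1+|t|)^{3/2}$ and there is no net gain from a single application); this is why even the first subconvex bound in this family (Li's $t^{11/16+\epsilon}$) required moments of $L$-functions via Kuznetsov rather than a direct Voronoi treatment, and why $3/5$ required the delta-method/conductor-lowering machinery and the delicate stationary-phase analysis of Bessel integrals carried out by Lin--Nunes--Qi. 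Your parenthetical ``alternatively one may simply quote this subconvex bound from the literature'' is the correct move and is what the paper does; but as written, the self-contained derivation is not salvageable without importing essentially the whole of that paper. (Minor point: the estimate $\sum_{n\le x}\lambda_f(n^2)\ll_f x^{3/5+\epsilon}$ is a \emph{consequence} of the subconvexity bound via Perron's formula, not an equivalent statement, so it cannot be used as a substitute input.)
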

\begin{proof}
See \cite{Lin}.
\end{proof}

\begin{lemma}[KR+AS]
For $\frac{1}{2}\leq \sigma\leq 2$, T-sufficiently large, there exist a $T^*\in[T,T+T^\frac{1}{3}]$ such that the bound 
\begin{equation*}
    \log\zeta(\sigma+iT)\ll (\log\log T^*)^2\ll(\log\log T)^2
\end{equation*}
holds uniformly and we have 
\begin{equation}
    \zeta(\sigma+iT)\ll \exp((\log\log T)^2)\ll_\epsilon T^\epsilon\label{E6}
\end{equation}
on the horizontal line with $T=T^*$ and $\frac{1}{2}\leq \sigma\leq 2.$
\end{lemma}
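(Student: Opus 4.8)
The plan is to locate the horizontal ordinate $T^*$ by a measure-theoretic (pigeonhole) argument driven by a second-moment bound for $\log\zeta$, rather than by the crude convexity estimate. First I would fix the principal branch of $\log\zeta(s)$ by continuing from $s=2+it$, where $\log\zeta(2+it)=O(1)$, along horizontal segments; across the strip one has the standard representation $\log\zeta(\sigma+it)=\log\zeta(2+it)-\int_\sigma^2 \frac{\zeta'}{\zeta}(u+it)\,du$, together with the explicit formula $\frac{\zeta'}{\zeta}(s)=\sum_{|\gamma-t|\le 1}\frac{1}{s-\rho}+O(\log t)$, where $\rho=\beta+i\gamma$ runs over the nontrivial zeros. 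Since there are only $O(\log T)$ zeros with $|\gamma-t|\le 1$, this already yields $\log\zeta(\sigma+iT)\ll(\log T)^2$ on any ordinate kept at distance $\gg 1/\log T$ from every $\gamma$; the whole point of the lemma is to replace this power of $\log T$ by a power of $\log\log T$, which convexity alone cannot provide.

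The engine for the improvement is Selberg's second-moment estimate
\[
\int_T^{2T}|\log\zeta(\sigma+it)|^2\,dt\ll T\log\log T,
\]
valid uniformly for $\tfrac12\le\sigma\le 2$ (with Selberg's treatment of $\arg\zeta=\Im\log\zeta$ and of $\log|\zeta|=\Re\log\zeta$ near the critical line). Integrating this in $\sigma$ over $[\tfrac12,2]$ and restricting the $t$-average to the window $[T,T+T^{1/3}]$ — using the short-interval form of the mean value, which still holds for windows as short as $T^{1/3}$ — I would deduce $\int_{1/2}^{2}\left(\int_T^{T+T^{1/3}}|\log\zeta(\sigma+it)|^2\,dt\right)d\sigma\ll T^{1/3}\log\log T$. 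A Chebyshev/pigeonhole argument over this window then produces a set of ordinates $T^*\in[T,T+T^{1/3}]$ of positive measure on which a discrete net of $\sigma$-values $\sigma_k$ (spacing $1/\log T$) simultaneously satisfies $|\log\zeta(\sigma_k+iT^*)|\ll\sqrt{\log\log T}$, and which I can in addition keep at distance $\gg 1/\log T$ from every zero ordinate $\gamma$ by discarding a further set of measure $o(T^{1/3})$.

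It remains to upgrade this net-point control to a bound uniform in $\sigma$. For this I would apply a convexity argument — Borel–Carathéodory together with the Hadamard three-circles inequality — to the function $\log\zeta(s)-\sum_j\log(s-\rho_j)$, where the $\rho_j$ are the $O(\log T)$ zeros within distance $1$ of the line $t=T^*$; subtracting these logarithmic singularities makes the function analytic on a disk about $2+iT^*$, its real part is controlled on a suitable contour by the net values of $\log|\zeta|$ together with $O(\log\log T)$ from the excised zeros, and interpolation back to an arbitrary $\sigma\in[\tfrac12,2]$ absorbs a further factor of order $\log\log T$. This yields $\log\zeta(\sigma+iT^*)\ll(\log\log T)^2$ uniformly for $\tfrac12\le\sigma\le 2$, and hence $\zeta(\sigma+iT^*)\ll\exp((\log\log T)^2)\ll_\epsilon T^\epsilon$.

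The main obstacle is precisely this last uniformization step: the second-moment input controls $\log\zeta$ only in an $L^2$-average sense and breaks down at the zeros, so the delicate part is to pass to a pointwise bound uniform across the whole segment $\tfrac12\le\sigma\le 2$ — including arbitrarily close to the critical line — while simultaneously keeping $T^*$ away from the logarithmically many nearby zeros and tracking the argument $\arg\zeta$. Getting the exponent to be exactly $(\log\log T)^2$, rather than a larger power of $\log\log T$ or a power of $\log T$, is where the interplay between Selberg's distribution estimates for $\log\zeta$ and the convexity/zero-excision argument must be balanced carefully; for the intended application, however, only the consequence $\zeta(\sigma+iT^*)\ll_\epsilon T^\epsilon$ is needed, so any power of $\log\log T$ in the exponent would in fact suffice.
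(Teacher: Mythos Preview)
The paper does not prove this lemma; it simply cites Lemma~1 of \cite{KRAS}. So there is no in-paper argument to compare against, and the question becomes whether your outline would actually recover the bound.

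There is a genuine gap in the uniformization step. You assert that after excising the zeros $\rho_j$ with $|\gamma_j-T^*|\le 1$ the contribution of $\sum_j\log(s-\rho_j)$ on your contour is $O(\log\log T)$. But there are $\asymp\log T$ such zeros, and at a point $s$ kept at distance $\gg 1/\log T$ from each of them one only has $\bigl|\log|s-\rho_j|\bigr|\ll\log\log T$ term by term; the sum is therefore $O\bigl((\log T)\log\log T\bigr)$, not $O(\log\log T)$. Feeding this into Borel--Carath\'eodory yields at best $|\log\zeta(\sigma+iT^*)|\ll(\log T)\log\log T$, hence only $\zeta\ll T^{C\log\log T}$, which misses even the weaker $T^\epsilon$ conclusion needed for the application. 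The same $\log T$ loss reappears if one instead interpolates between your net points by integrating $\tfrac{\zeta'}{\zeta}(s)=\sum_{|\gamma-T^*|\le 1}(s-\rho)^{-1}+O(\log T)$ along the segment, since that sum again has $\asymp\log T$ terms of size up to $\log T$. A secondary issue is that the short-interval Selberg moment $\int_T^{T+H}|\log\zeta(\sigma+it)|^2\,dt\ll H\log\log T$ is not known unconditionally for $H=T^{1/3}$; the standard unconditional results require $H\ge T^{1/2+\epsilon}$ or assume RH. Passing from a power of $\log T$ down to $(\log\log T)^2$ genuinely requires an additional idea beyond pigeonhole-on-a-moment plus convexity with zero excision, and your sketch does not supply it.
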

\begin{proof}
See lemma $1$ of \cite{KRAS}.
\end{proof}

\begin{lemma}
    Let $\lambda>0,\mu>0$ and $\alpha<\sigma< \beta$. Then we have 
\begin{align}
    J(\sigma, p\lambda+q\mu)=O\{J^p(\alpha, \lambda)J^q(\beta, \mu)\},\label{E7}
\end{align}
where $\displaystyle J(\sigma, \lambda)=\left\{\int_0^T| f(\sigma+it)|^\frac{1}{\lambda}dt \right\}^\lambda$, $p=\frac{\beta-\sigma}{\beta-\alpha}$ and $q=\frac{\sigma-\alpha}{\beta-\alpha}$.
\end{lemma}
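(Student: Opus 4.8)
The plan is to recognise this as Gabriel's convexity theorem for integral means of an analytic function and to prove it by analytic (Riesz--Thorin/Stein) interpolation. First I would rewrite the functional $J$ as an $L^p$-norm: since $\left(\int_0^T|f(\sigma+it)|^{1/\lambda}\,dt\right)^{\lambda}=\|f(\sigma+i\cdot)\|_{L^{1/\lambda}(0,T)}$, we have $J(\sigma,\lambda)=\|f(\sigma+i\cdot)\|_{L^{1/\lambda}(0,T)}$. A direct computation gives $\sigma=p\alpha+q\beta$ with $p+q=1$ and $p,q\ge 0$, so $\sigma$ is the barycentric combination of $\alpha,\beta$ with weights $p,q$. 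Writing $p_0=1/\lambda$, $p_1=1/\mu$ and $\theta=q$, and defining the intermediate exponent by $1/p_\theta=(1-\theta)\lambda+\theta\mu=p\lambda+q\mu=\nu$, the assertion \eqref{E7} becomes exactly the three-lines inequality $\|f(\sigma+i\cdot)\|_{L^{1/\nu}}\ll\|f(\alpha+i\cdot)\|_{L^{1/\lambda}}^{\,1-\theta}\,\|f(\beta+i\cdot)\|_{L^{1/\mu}}^{\,\theta}$, in which both the abscissa $\sigma$ and the exponent $1/p_\theta$ interpolate linearly between the two boundary lines. Each $J(\cdot,\cdot)$ is homogeneous of degree one in $f$ and $p+q=1$, so both sides scale identically; hence I may normalise $J(\alpha,\lambda)=J(\beta,\mu)=1$ and reduce to proving $J(\sigma,\nu)\ll 1$.

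For the interpolation engine I would map the strip $\alpha\le\Re s\le\beta$ onto $0\le\Re z\le 1$ by $s=\alpha+(\beta-\alpha)z$, so that $w\mapsto f(\alpha+(\beta-\alpha)w+it)$ is analytic in $w$. In the range where the middle exponent satisfies $1/\nu\ge 1$, I would express the norm by duality, $J(\sigma,\nu)=\sup\{|\langle f(\sigma+i\cdot),g\rangle|:\|g\|_{L^{(1/\nu)'}}\le 1\}$, and form the Stein family $\Phi(z)=\int_0^T f(\alpha+(\beta-\alpha)z+it)\,G_z(t)\,dt$, where $G_z$ is the standard analytic interpolation of the test function $g$ that is $L^{p_0'}$-normalised on $\Re z=0$ and $L^{p_1'}$-normalised on $\Re z=1$. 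Because the $t$-integration runs over the compact segment $[0,T]$ and $f$ is of finite order in the region, $\Phi$ is analytic and bounded on the strip, so Hadamard's three-lines theorem gives $|\Phi(\theta)|\le\big(\sup_{\Re z=0}|\Phi|\big)^{1-\theta}\big(\sup_{\Re z=1}|\Phi|\big)^{\theta}$. Hölder's inequality bounds the two suprema by $J(\alpha,\lambda)$ and $J(\beta,\mu)$, and taking the supremum over $g$ converts $\Phi(\theta)$ into $J(\sigma,\nu)$; this yields \eqref{E7} with an absolute implied constant.

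The main obstacle, and the reason the statement is phrased with $O\{\cdots\}$ rather than a bare inequality, lies in two places. First, the segment $[0,T]$ is not translation invariant, so a nonzero imaginary part of $z$ shifts the $t$-window of integration on the boundary lines; controlling $\sup_{\Im z}$ of the shifted boundary norms by the fixed-window quantities $J(\alpha,\lambda)$ and $J(\beta,\mu)$ costs only an absolute constant, which I would absorb into the $O\{\cdots\}$ using the polynomial growth of the relevant $L$-functions. Second, and more seriously, when $\lambda$ or $\mu$ exceeds $1$ the exponents fall below $1$, $L^p$ is no longer a normed space, and the duality step is unavailable. The remedy — which I expect to be the hardest part to carry out carefully — is to bypass duality through the subharmonicity of $|f|^{p}$, valid for every $p>0$ when $f$ is analytic: one runs the Phragm\'{e}n--Lindel\"{o}f argument on a subharmonic majorant, or factors out the zeros of $f$ so that a fractional power becomes single-valued and analytic, and then re-runs the three-lines step. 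Modulo this point the result is Gabriel's classical convexity theorem, and the interpolation skeleton above is the cleanest route to it; alternatively one may invoke Gabriel's original elementary argument, which delivers the finite-range statement directly.
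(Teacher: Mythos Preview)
The paper itself does not prove this lemma at all: its ``proof'' is the single line ``See pp.~236 of \cite{Titchmarsh}.''  So there is no in-paper argument to compare your sketch against; the relevant comparison is with the classical argument that the citation points to (Gabriel's convexity theorem, also Hardy--Ingham--P\'olya, as recorded in Titchmarsh's Chapter~7).

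Your identification of the statement and your overall strategy are correct.  Rewriting $J(\sigma,\lambda)$ as the $L^{1/\lambda}(0,T)$-norm of $f(\sigma+i\cdot)$, checking $\sigma=p\alpha+q\beta$, and recognising the claim as a three-lines inequality with simultaneously interpolated abscissa and exponent is exactly the right reduction.  The Stein-interpolation engine you describe does the job when $1/\nu\ge 1$, and you have correctly flagged the two genuine technical points: the finite $t$-window $[0,T]$ is not invariant under the imaginary shift coming from $\Im z$, and duality is unavailable once any exponent drops below~$1$.  Both are real issues, not just cosmetic, and both are handled in the classical literature.

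The one place where your proposal differs in spirit from the source Titchmarsh cites is the treatment of exponents below~$1$.  Gabriel's original argument (and the presentation in Titchmarsh) does not route through duality and Stein families; it works directly with the subharmonicity of $|f|^{p}$ for all $p>0$ and a Phragm\'en--Lindel\"of estimate on the strip, so the $p<1$ obstruction simply never appears.  Your fallback suggestion (``bypass duality through the subharmonicity of $|f|^{p}$'') is therefore not a patch but is in fact the main line of the classical proof.  If you rewrite your sketch around that idea from the outset, the argument becomes shorter and uniform in $\lambda,\mu>0$, and the finite-interval issue is then handled by the finite-order hypothesis on $f$ that Titchmarsh states explicitly (and which certainly holds for the $L$-functions used later in the paper).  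With that adjustment your proof is complete and matches what the cited reference actually contains.
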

\begin{proof}
    See pp. 236 of \cite{Titchmarsh}
\end{proof}

\begin{lemma}
    Let $f\in H_k$ and $j\geq 3$. Then we have 
    \begin{equation}
        \left\{\int_0^T\prod_{n=1}^{j-1}\left| L\left(\frac{5}{8}+\epsilon+it, {\rm{sym}}^{2n}f\right)\right|^\frac{40}{15+24\epsilon}dt\right\}^\frac{15+24\epsilon}{40}\ll_{f,\epsilon} T^{\frac{3j^2}{16}-\frac{3}{10}+\epsilon}.\label{E8}
    \end{equation}
\end{lemma}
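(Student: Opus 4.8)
The plan is to decouple the $j-1$ factors by Hölder's inequality, treating the single degree‑$3$ factor $L(\cdot,\mathrm{sym}^2 f)$ separately — its subconvex bound \eqref{E5} is exactly the source of the improvement over Theorem~\ref{TB} — and handling the higher symmetric powers $\mathrm{sym}^{2n}f$ ($2\le n\le j-1$), which have degree $2n+1\ge 5$, via the mean‑square bound \eqref{E1} together with the convexity bound \eqref{E2}. Put $\sigma_0=\tfrac58+\epsilon$, $p=\tfrac{40}{15+24\epsilon}$ and $L_n(t)=L(\sigma_0+it,\mathrm{sym}^{2n}f)$. Each $L(s,\mathrm{sym}^{2n}f)$ ($1\le n\le j-1$) is entire, hence bounded on the segment $\{\sigma_0+it:0\le t\le 1\}$, so the part $t\in[0,1]$ of the integral contributes $O_{f,j,\epsilon}(1)$ and may be discarded; it then suffices to estimate $\int_1^{T}\prod_{n=1}^{j-1}|L_n(t)|^{p}\,dt$.

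First I would peel off $L_1$. Since $\tfrac65(1-\sigma_0)=\tfrac9{20}-\tfrac{6\epsilon}{5}$, the subconvex bound \eqref{E5} gives $\displaystyle\sup_{1\le t\le T}|L_1(t)|\ll_{f,\epsilon}T^{9/20}$, whence
\[
\int_1^{T}\prod_{n=1}^{j-1}|L_n(t)|^{p}\,dt\ \le\ \Bigl(\sup_{1\le t\le T}|L_1(t)|\Bigr)^{p}\int_1^{T}\prod_{n=2}^{j-1}|L_n(t)|^{p}\,dt\ \ll\ T^{9p/20}\int_1^{T}\prod_{n=2}^{j-1}|L_n(t)|^{p}\,dt .
\]
To the remaining $j-2$ factors I apply Hölder with equal exponents, reducing everything to the single‑factor moments $\int_1^{T}|L_n(t)|^{\,p(j-2)}\,dt$ for $2\le n\le j-1$. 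For such $n$ one has $(2n+1)(1-\sigma_0)\ge 5(\tfrac38-\epsilon)>1$, so \eqref{E1} summed over dyadic subintervals of $[1,T]$ gives $\int_1^{T}|L_n(t)|^2\,dt\ll T^{3(2n+1)/8}$, while \eqref{E2} gives $\sup_{1\le t\le T}|L_n(t)|\ll T^{3(2n+1)/16}$. As $p(j-2)\ge 2$ for every $j\ge 3$ (the tight case being $j=3$, where $p(j-2)=p\ge 2$ once $\epsilon$ is small), I would interpolate
\[
\int_1^{T}|L_n(t)|^{\,p(j-2)}\,dt\ \le\ \Bigl(\sup_{1\le t\le T}|L_n(t)|\Bigr)^{p(j-2)-2}\int_1^{T}|L_n(t)|^2\,dt\ \ll\ T^{\frac{3(2n+1)}{16}\,p(j-2)},
\]
the two exponents $\tfrac{3(2n+1)}{16}(p(j-2)-2)$ and $\tfrac{3(2n+1)}{8}$ summing to $\tfrac{3(2n+1)}{16}p(j-2)$ exactly. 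Taking $(j-2)$‑th roots and multiplying over $n$ gives $\int_1^{T}\prod_{n=2}^{j-1}|L_n(t)|^{p}\,dt\ll T^{\frac{3p}{16}\sum_{n=2}^{j-1}(2n+1)}=T^{\frac{3p}{16}(j^2-4)}$.

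Combining the two displays, $\int_0^{T}\prod_{n=1}^{j-1}|L_n(t)|^{p}\,dt\ll T^{\frac{9p}{20}+\frac{3p(j^2-4)}{16}}$; raising to the power $1/p$ and using $\tfrac9{20}-\tfrac34=-\tfrac3{10}$ yields $\ll T^{\frac{9}{20}+\frac{3(j^2-4)}{16}}=T^{\frac{3j^2}{16}-\frac3{10}}$, which is in fact slightly stronger than claimed, the surplus $\epsilon$ being covered by the $\epsilon$‑savings in the exponents at $\sigma_0=\tfrac58+\epsilon$ that more than compensate the $+\epsilon$ losses in \eqref{E1}, \eqref{E2}, \eqref{E5}. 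The computation is routine; the one substantive point — and the reason this beats Theorem~\ref{TB} — is that isolating $\mathrm{sym}^2 f$ and invoking its \emph{subconvex} exponent $\tfrac65$ from \eqref{E5}, rather than the convexity exponent $\tfrac32$ implied by \eqref{E2}, replaces the loss $\tfrac3{16}$ that a uniform Hölder split of all $j-1$ factors would give by the sharper $\tfrac3{10}$. The only steps needing care are the verification that $p(j-2)\ge 2$ in the borderline case $j=3$ and the $\epsilon$‑bookkeeping just mentioned.
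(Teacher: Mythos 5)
Your proof is correct and reaches the paper's exponent $\frac{3j^2}{16}-\frac{3}{10}$, but by a genuinely different route. The paper applies its Lemma 2.9 (the Hardy--Littlewood--Gabriel convexity theorem for integral means) to the full product $\prod_{n=1}^{j-1}L(s,\mathrm{sym}^{2n}f)$, interpolating in the $\sigma$-aspect between the second moment on $\Re s=\tfrac12+\epsilon$ --- which it estimates exactly as you estimate things, namely the sup of the $\mathrm{sym}^2$ factor via the subconvex bound \eqref{E5} times the mean square \eqref{E1} of the remaining degree-$(j^2-4)$ product --- and the essentially bounded $\tfrac1\mu$-th moment on $\Re s=1+\epsilon$, with weights $p=\tfrac34$, $q=\tfrac14$ chosen so that $p\lambda+q\mu=\tfrac{15+24\epsilon}{40}$. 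You instead stay on the line $\Re s=\tfrac58+\epsilon$ throughout and interpolate in the moment exponent rather than in $\sigma$: equal-exponent H\"older across the factors $2\le n\le j-1$, then the elementary inequality $\int|L_n|^{q}\le\bigl(\sup|L_n|\bigr)^{q-2}\int|L_n|^2$ fed by \eqref{E1} and the convexity bound \eqref{E2}. Both interpolations are lossless in the same way (your exponents $\tfrac{3(2n+1)}{16}(q-2)+\tfrac{3(2n+1)}{8}$ telescope to $\tfrac{3(2n+1)}{16}q$, just as Gabriel's theorem costs nothing), so the numerology coincides. Your version dispenses with Lemma 2.9 at the price of invoking the pointwise bound \eqref{E2} for each higher symmetric power individually, whereas the paper needs only second-moment information at $\sigma=\tfrac12$ plus boundedness at $\sigma=1+\epsilon$; both hinge on the same substantive input, the subconvexity exponent $\tfrac65$ for $\mathrm{sym}^2f$, which you correctly identify as the source of the improvement. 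The borderline checks you flag ($p(j-2)\ge2$ at $j=3$, dyadic summation in \eqref{E1}, absorption of the various $\epsilon$'s) all go through.
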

\begin{proof}
    We prove this lemma by using  the lemma 2.9. For that, we choose the parameters suitably as follows.\\
    We choose $\alpha=\frac{1}{2}+\epsilon,\ \sigma=\frac{5}{8}+\epsilon,\ \beta=1+\epsilon $ and $\lambda=\frac{1}{2}$. Then we get $p=\frac{\beta-\sigma}{\beta-\alpha}=\frac{3}{4}$ and $q=1-p=\frac{1}{4}$.\\
    Now we choose $\mu=\frac{24\epsilon}{10}>0$, so that we get $p\lambda+q\mu=\frac{15+24\epsilon}{40}$.\\
    Therefore, we have 
    \begin{align*}
        &   \left\{\int_0^T\prod_{n=1}^{j-1}\left| L\left(\frac{5}{8}+2\epsilon+it, {\rm{sym}}^{2n}f\right)\right|^\frac{40}{15+24\epsilon}dt\right\}^\frac{15+24\epsilon}{40}\\
        &\ll \left\{\int_0^T\prod_{n=1}^{j-1}\left| L\left(\frac{1}{2}+\epsilon+it, {\rm{sym}}^{2n}f\right)\right|^\frac{1}{\lambda} dt\right\}^{p\lambda} \left\{\int_0^T\prod_{n=1}^{j-1}\left| L\left(1+\epsilon+it, {\rm{sym}}^{2n}f\right)\right|^\frac{1}{\mu}dt\right\}^{q\mu}\\
        &\ll_{f,\epsilon} \left\{\max_{0\leq t\leq T}\left|L\left(\frac{1}{2}+\epsilon+it, {\rm{sym}}^2f\right)\right|^2\int_0^T\prod_{n=2}^{j-1}\left| L\left(\frac{1}{2}+\epsilon+it, {\rm{sym}}^{2n}f\right)\right|^2 dt\right\}^\frac{3}{8}\times T^{\frac{24\epsilon}{40}}\\
        &\ll_{f,\epsilon} T^{\left(\frac{12}{5}+(j^2-4)\right)(\frac{1}{2}-\epsilon)\frac{3}{8}+\frac{24\epsilon}{40}}\\
        &\ll_{f,\epsilon} T^{\frac{3j^2}{16}-\frac{3}{10}+\epsilon},
    \end{align*} 
    which followed by the lemmas 2.4 and 2.7, and the fact that the $L$-function $\displaystyle\prod_{n=2}^{j-1} L\left(s, {\rm{sym}}^{2n}f\right)$ is of degree $j^2-4$, and $\displaystyle\prod_{n=1}^{j-1}\left| L\left(1+\epsilon+it, {\rm{sym}}^{2n}f\right)\right|\ll_{f, \epsilon} 1$ for every integer $j\geq 3$.
\end{proof}

\section{Proof of Theorem 1.1}
Let $j\geq 3$. By lemma 2.1, we have 
\begin{align}
    \sum_{\substack{n=a_1^2+a_2^2+\hdots+a_6^2\leq x\\\left(a_1,a_2,\hdots, a_6\right)\in \mathbb{Z}^6}}\lambda_{{\rm{sym}}^j  f}^2(n)&=16\sum_{n\leq x}\lambda_{{\rm{sym}}^jf}^2(n) l(n)-4\sum_{n\leq x}\lambda_{{\rm{sym}}^jf}^2(n) \upsilon(n)\nonumber\\
   &:=\textit{$\sum$}_1+\textit{$\sum$}_2,\label{E9}
\end{align}
where $\displaystyle l(n)=\sum_{d|n}\chi(d)\frac{n^2}{d^2}$ and $\displaystyle \upsilon(n)=\sum_{d|n}\chi(d)d^2$.\\
For $\displaystyle\textit{$\sum$}_1$ by applying Perron's formula to $F_j(s)$, by lemma 2.2, we get
\begin{align*}
    \textit{$\sum$}_1&=\frac{16}{2\pi i} \int_{3+\epsilon-iT}^{3+\epsilon+iT} F_j(s)\frac{x^s}{s} ds+O\left(\frac{x^{3+\epsilon}}{T}\right),
\end{align*}
where $10\leq T\leq x$ is a parameter to be chosen later suitably, we make the special choice of $T=T^*$ of lemma 2.8 satisfying \eqref{E6}. \\
We move the line of integration to $\Re(s)=2+\frac{5}{8}+\epsilon$, probably $\epsilon<\frac{5}{216}$, so that the lemma 2.5 can be used. In the rectangle formed by the line segments joining the points $3+\epsilon-iT,\ 3+\epsilon+iT,\ 2+\frac{5}{8}+\epsilon+iT,\ 2+\frac{5}{8}+\epsilon-iT,\ 3+\epsilon-iT$, we note that $F_j(s)$ is a meromorphic function having a simple pole at $s=3$. \\
Thus by Cauchy's residue theorem, we have
\begin{align*}
    \textit{$\sum$}_1&=\mathcal{C}_{f, j}x^3 +\frac{16}{2\pi i}\left\{\int_{2+\frac{5}{8}+\epsilon-iT}^{2+\frac{5}{8}+\epsilon+iT}+\int_{2+\frac{5}{8}+\epsilon-iT}^{3+\epsilon-iT}+\int_{2+\frac{5}{8}+\epsilon+iT}^{3+\epsilon+iT}\right\}F_j(s)\frac{x^s}{s} ds+O\left(\frac{x^{3+\epsilon}}{T}\right)\\
    &:=\mathcal{C}_{f, j}x^3 +I_1+I_2+I_3+O\left(\frac{x^{3+\epsilon}}{T}\right),
\end{align*}
where $\displaystyle \mathcal{C}_{f,j}x^3=16{\it Res}_{s=3}F_j(s)\frac{x^s}{s}=\frac{16}{3}x^3 L(3, \chi)\prod_{n=1}^j L(1, {\rm{sym}}^{2n}f) L(3, {\rm{sym}}^{2n}f\otimes\chi)H_j(3)$.\\
By using the lemmas 2.2, 2.4, 2.5, 2.10 and the H{\"o}lders inequality, we derive the vertical line contribution $I_1$ as follows:
\begin{align*}
    I_1&\ll \int_{2+\frac{5}{8}+\epsilon-iT}^{2+\frac{5}{8}+\epsilon+iT} \left|\zeta(s-2)\prod_{n=1}^j L(s-2, {\rm{sym}}^{2n}f)\frac{x^s}{s}ds\right|\\
    &\ll \int_{-T}^{T} \left|\zeta\left(\frac{5}{8}+\epsilon+it\right) \prod_{n=1}^j L\left(\frac{5}{8}+\epsilon+it, {\rm{sym}}^{2n}f\right) \right|\frac{x^{2+\frac{5}{8}+\epsilon}}{|2+\frac{5}{8}+\epsilon+it|} dt\\
    &\ll x^{2+\frac{5}{8}+\epsilon} +x^{2+\frac{5}{8}+\epsilon} \int_{10}^T \left|\zeta\left(\frac{5}{8}+\epsilon+it\right) \prod_{n=1}^j L\left(\frac{5}{8}+\epsilon+it, {\rm{sym}}^{2n}f\right) \right| t^{-1}dt\\
    &\ll x^{2+\frac{5}{8}+\epsilon}+x^{2+\frac{5}{8}+\epsilon} \sup_{10\leq T_1\leq T} \left\{\int_0^T\prod_{n=1}^{j-1}\left| L\left(\frac{5}{8}+\epsilon+it, {\rm{sym}}^{2n}f\right)\right|^\frac{40}{15+24\epsilon}dt\right\}^\frac{15+24\epsilon}{40}\times\\
    &\qquad \left\{\int_{T_1}^{2T_1}\left|L\left(\frac{5}{8}+\epsilon+it,{\rm{sym}}^{2j}f \right)\right|^2\right\}^\frac{1}{2} \left\{\int_{T_1}^{2T_1}\left|\zeta\left(\frac{5}{8}+\epsilon+it\right)\right|^\frac{40}{5-24\epsilon}\right\}^\frac{5-24\epsilon}{40} T_1^{-1}\\
    &\ll_{f,\epsilon} x^{2+\frac{5}{8}+\epsilon}+x^{2+\frac{5}{8}+\epsilon} T^{\frac{5-24\epsilon}{40}+\frac{2j+1}{2}(\frac{3}{8}-\epsilon)+\frac{3j^2}{16}-\frac{3}{10}-1+\epsilon}\\
    &\ll_{f,\epsilon} x^{2+\frac{5}{8}+\epsilon} T^{\frac{3(j+1)^2}{16}-\frac{7}{40}-1+\epsilon}.
\end{align*}
Now by using the lemmas 2.4, 2.7 and 2.8 we derive the horizontal lines contribution $I_2+I_3$, as follows:
\begin{align*}
    I_2+I_3&\ll \int_{2+\frac{5}{8}+\epsilon+iT}^{3+\epsilon+iT} \left|\zeta(s-2)\prod_{n=1}^j L(s-2, {\rm{sym}}^{2n}f)\frac{x^s}{s}ds\right|\\
    &\ll \int_{\frac{5}{8}+\epsilon}^{1+\epsilon} \left|\zeta(\sigma+iT)\prod_{n=1}^j L(\sigma+iT, {\rm{sym}}^{2n}f)\right|x^{2+\sigma} T^{-1} d\sigma\\
    &\ll \int_{\frac{5}{8}+\epsilon}^{1+\epsilon} T^{\epsilon+\frac{6}{5}(1-\sigma)+\frac{(j+1)^2-4}{2}(1-\sigma)-1} x^{2+\sigma} d\sigma\\
    &\ll x^2\max_{\frac{5}{8}+\epsilon\leq\sigma\leq 1+\epsilon} \left(\frac{x}{T^{\frac{(j+1)^2}{2}-\frac{4}{5}}}\right)^\sigma T^{\frac{(j+1)^2}{2}-\frac{4}{5}-1+\epsilon}\\
    &\ll \frac{x^{3+\epsilon}}{T}+x^{2+\frac{5}{8}+\epsilon} T^{\frac{3(j+1)^2}{16}-\frac{3}{10}-1+\epsilon},
\end{align*}
which followed by the fact that the $L$-function $\displaystyle\prod_{n=2}^j L(s, {\rm{sym}}^{2n}f)$ is of degree $(j+1)^2-4$.\\
Therefore, we have
\begin{equation*}
     \textit{$\sum$}_1=\mathcal{C}_{f, j}x^3 +O\left(x^{2+\frac{5}{8}+\epsilon} T^{\frac{3(j+1)^2}{16}-\frac{7}{40}-1+\epsilon}\right)+O\left(\frac{x^{3+\epsilon}}{T}\right).
\end{equation*} 
Thus, by choosing $\displaystyle T=x^{\frac{30}{15(j+1)^2-14}}$, we obtain
\begin{equation}
     \textit{$\sum$}_1=\mathcal{C}_{f, j}x^3 +O\left(x^{3-\frac{30}{15(j+1)^2-14}}
\right),\label{E10}
\end{equation} 
where $\displaystyle \mathcal{C}_{f,j}x^3=\frac{16}{3}x^3 L(3, \chi)\prod_{n=1}^j L(1, {\rm{sym}}^{2n}f) L(3, {\rm{sym}}^{2n}f\otimes\chi)H_j(3)$.\\
Now for $\displaystyle\textit{$\sum$}_2$ by applying the Perron's formula to $\widetilde{F}_{j}(s)$, by lemma 2.3, we get
\begin{align*}
    \textit{$\sum$}_2&=\frac{-4}{2\pi i} \int_{3+\epsilon-iT}^{3+\epsilon+iT} \widetilde{F}_{j}(s)\frac{x^s}{s} ds+O\left(\frac{x^{3+\epsilon}}{T}\right),
\end{align*}
where $10\leq T\leq x$ is a parameter to be chosen later suitably.\\
We move the line of integration to $\Re(s)=2+\frac{2}{3}$. In the rectangle formed by the line segments joining the points $3+\epsilon-iT,\ 3+\epsilon+iT,\ 2+\frac{2}{3}+iT,\ 2+\frac{2}{3}-iT,\ 3+\epsilon-iT$, we note that $\widetilde{F}_{j}(s)$ does not have any singularities.
Thus by Cauchy's theorem, we have
\begin{align*}
    \textit{$\sum$}_2&\ll \left\{\int_{2+\frac{2}{3}-iT}^{2+\frac{2}{3}+iT}+\int_{2+\frac{2}{3}-iT}^{3+\epsilon-iT}+\int_{2+\frac{2}{3}+iT}^{3+\epsilon+iT}\right\}\widetilde{F}_{j}(s)\frac{x^s}{s} ds+O\left(\frac{x^{3+\epsilon}}{T}\right)\\
    &:= J_1+J_2+J_3+O\left(\frac{x^{3+\epsilon}}{T}\right).
\end{align*}
By using the lemmas 2.4, 2.6, 2.7 and the Cauchy's inequality, we derive the vertical line contribution $J_1$ as follows: 
\begin{align*}
    J_1&\ll \int_{2+\frac{2}{3}-iT}^{2+\frac{2}{3}+iT} \left|L(s-2,\  \chi)\prod_{n=1}^j L(s-2, {\rm{sym}}^{2n}f\otimes \chi)\frac{x^s}{s}ds\right|\\
    &\ll \int_{-T}^{T} \left|L\left(\frac{2}{3}+it,\  \chi\right) \prod_{n=1}^j L\left(\frac{2}{3}+it, {\rm{sym}}^{2n}f\otimes \chi\right) \right|\frac{x^{2+\frac{2}{3}}}{|2+\frac{2}{3}+it|} dt\\
    &\ll x^{2+\frac{2}{3}} +x^{2+\frac{2}{3}} \int_{10}^T \left|L\left(\frac{2}{3}+it,\  \chi\right) \prod_{n=1}^j L\left(\frac{2}{3}+it, {\rm{sym}}^{2n}f\otimes \chi\right) \right| t^{-1}dt\\
    &\ll x^{2+\frac{2}{3}}+x^{2+\frac{2}{3}} \sup_{10\leq T_1\leq T} T_1^{-1}\left\{\max_{T_1\leq t\leq 2T_1 }\left|L\left(\frac{2}{3}+it,\ \chi\right)L\left(\frac{2}{3}+it, {\rm{sym}}^{2}f\otimes \chi\right)\right| \right\} \times\\
    & \qquad\left\{\int_{T_1}^{2T_1}\left| L\left(\frac{2}{3}+it, {\rm{sym}}^4f\otimes \chi\right)\right|^2dt\right\}^\frac{1}{2} \left\{\int_{T_1}^{2T_1}\prod_{n=3}^{j}\left| L\left(\frac{2}{3}+it, {\rm{sym}}^{2n}f\otimes \chi\right)\right|^2dt\right\}^\frac{1}{2}\\
    &\ll x^{2+\frac{2}{3}}+x^{2+\frac{2}{3}} T^{\frac{1}{9}+\frac{2}{5}+\frac{5}{6}+\frac{(j+1)^2-9}{2}\frac{1}{3}-1+\epsilon}\\
    &\ll x^{2+\frac{2}{3}} T^{\frac{(j+1)^2}{6}-\frac{7}{45}-1+\epsilon},
\end{align*}
which followed by the facts that the $L$-function $\displaystyle\prod_{n=3}^j L(s, {\rm{sym}}^{2n}f)$ is of degree $(j+1)^2-9$ and  $\chi$ is the nonprincipal Dirichlet character modulo $4$.\\
Now by using the lemmas 2.4, 2.6 and 2.7 we derive the horizontal lines contribution $J_2+J_3$, as follows:
\begin{align*}
    J_2+J_3&\ll \int_{2+\frac{2}{3}+iT}^{3+\epsilon+iT} \left|L(s-2,\  \chi)\prod_{n=1}^j L(s-2, {\rm{sym}}^{2n}f\otimes \chi)\frac{x^s}{s}ds\right|\\
    &\ll \int_{\frac{2}{3}}^{1+\epsilon} \left|L(\sigma+iT,\  \chi)\prod_{n=1}^j L(\sigma+iT, {\rm{sym}}^{2n}f\otimes \chi)\right|x^{2+\sigma} T^{-1} d\sigma\\
    &\ll \int_{\frac{2}{3}}^{1+\epsilon} T^{\frac{1-\sigma}{3}+\frac{6}{5}(1-\sigma)+\frac{(j+1)^2-4}{2}(1-\sigma)-1} x^{2+\sigma} d\sigma\\
    &\ll x^2\max_{\frac{2}{3}\leq\sigma\leq 1+\epsilon} \left(\frac{x}{T^{\frac{(j+1)^2}{2}-\frac{7}{15}}}\right)^\sigma T^{\frac{(j+1)^2}{2}-\frac{7}{15}-1+\epsilon}\\
    &\ll \frac{x^{3+\epsilon}}{T}+x^{2+\frac{2}{3}} T^{\frac{(j+1)^2}{6}-\frac{7}{45}-1+\epsilon},
\end{align*}
which followed by the facts that the $L$-function $\displaystyle\prod_{n=2}^j L(s, {\rm{sym}}^{2n}f)$ is of degree $(j+1)^2-4$ and  $\chi$ is the nonprincipal Dirichlet character modulo $4$.\\
Therefore, we have
\begin{equation*}
    \textit{$\sum$}_2\ll \frac{x^{3+\epsilon}}{T}+x^{2+\frac{2}{3}} T^{\frac{(j+1)^2}{6}-\frac{7}{45}-1+\epsilon}.
\end{equation*}
Thus by choosing $T=x^{\frac{30}{15(j+1)^2-14}}$, we obtain 
\begin{equation}
    \textit{$\sum$}_2\ll x^{3-\frac{30}{15(j+1)^2-14}}.\label{E11}
\end{equation}
Combining \eqref{E9}, \eqref{E10} and \eqref{E11}, we get 
\begin{equation*}
    \sum_{\substack{n=a_1^2+a_2^2+\hdots+a_6^2\leq x\\\left(a_1,a_2,\hdots, a_6\right)\in \mathbb{Z}^6}}\lambda_{{\rm{sym}}^j  f}^2(n)=\mathcal{C}_{f, j}x^3 +O\left(x^{3-\frac{30}{15(j+1)^2-14}}\right),
\end{equation*}
where $\displaystyle \mathcal{C}_{f,j}x^3=\frac{16}{3}x^3 L(3, \chi)\prod_{n=1}^j L(1, {\rm{sym}}^{2n}f) L(3, {\rm{sym}}^{2n}f\otimes\chi)H_j(3)$.\\
This proves theorem \ref{T1.1}. \qed

\textbf{Acknowledgements:} The first author wishes to express his gratitude to the Funding Agency ``Ministry of Education (MoE), Govt. of India" for the fellowship PMRF, Id: 3701831 for its financial support.

\end{document}